\documentclass[11pt,a4paper]{article}

\usepackage[T1]{fontenc}
\usepackage[utf8]{inputenc}
\usepackage{lmodern}

\usepackage{amsmath,amssymb,amsthm,mathtools}
\usepackage[a4paper,margin=2.7cm]{geometry}
\usepackage{enumitem}

\usepackage{hyperref}
\hypersetup{
    colorlinks=true,
    linkcolor=blue,
    citecolor=blue,
    urlcolor=blue
}

\newtheorem{theorem}{Theorem}[section]
\newtheorem{proposition}[theorem]{Proposition}
\newtheorem{lemma}[theorem]{Lemma}
\newtheorem{corollary}[theorem]{Corollary}

\theoremstyle{definition}
\newtheorem{definition}[theorem]{Definition}
\newtheorem{example}[theorem]{Example}

\theoremstyle{remark}
\newtheorem{remark}[theorem]{Remark}

\newcommand{\A}{R(T)}
\newcommand{\Htwo}{L^2(T)}
\newcommand{\Hone}{L^1(T)}
\newcommand{\BT}{\mathcal B_T}
\newcommand{\BTr}{\mathcal B_T^r}
\newcommand{\inner}[2]{\left\langle #1,#2\right\rangle_T}
\newcommand{\norm}[1]{\left\|#1\right\|_{T,2}}
\newcommand{\opnorm}[1]{\left\|#1\right\|_{T,\mathrm{op}}}

\title{Adjoints, Order and Spectral Moduli on $L^2(T)$}
\author{Mohamed Amine BEN AMOR \\\small{Research Laboratory of Algebra, Topology, Arithmetic, and Order }\\
 \small{Department of Mathematics, Faculty of Mathematical, Physical and Natural Sciences of Tunis} \\
\small{Tunis-El Manar University, 2092-El Manar, Tunisia}}
\date{}

\begin{document}

\maketitle

\begin{abstract}
We study adjoints of $R(T)$-linear, $T$-strongly bounded operators on the
conditional space $L^2(T)$ and the interaction between its Hilbert-type
geometry and its Riesz-space order.  This produces two natural operator
moduli: the Riesz--Kantorovich order modulus and the spectral modulus
$(S^*S)^{1/2}$.  After isolating the $T$-regular operators, we show that
they form a Dedekind-complete order ideal in the natural lattice of
order-bounded $R(T)$-module homomorphisms.  On this operator lattice the
order operations remain $T$-strongly bounded and the adjoint is an order
automorphism.  We then compare the two moduli and explain why they may differ.
\end{abstract}

\medskip
\noindent\textbf{Keywords:} conditional expectation operator; $L^2(T)$; adjoint operator; regular operator; Riesz--Kantorovich modulus; spectral modulus; vector lattice.

\medskip
\noindent\textbf{2020 Mathematics Subject Classification:} Primary 47B65, 46A40; Secondary 47B15, 46C50.

\section{Introduction}

For a Hilbert space $\mathcal K$ and a bounded linear operator
$S\in\mathcal B(\mathcal K)$, the existence of the adjoint is one of the
most basic consequences of the classical Riesz representation theorem.
For each $y\in\mathcal K$, the map
\[
    x\longmapsto \langle Sx,y\rangle
\]
is a bounded linear functional; hence there is a unique vector $S^*y$ such
that
\[
    \langle Sx,y\rangle=\langle x,S^*y\rangle.
\]
This argument is standard in Hilbert-space theory; see, for example,
\cite{ConwayFA,ReedSimonI}.  The main point is not completeness alone, but
the representation of continuous linear functionals by inner products.

When scalar-valued inner products are replaced by algebra-valued inner
products, adjoints become subtler.  For Hilbert $C^*$-modules, a bounded
module map need not be adjointable, and adjointable maps therefore form a
distinguished operator class; see Lance \cite{Lance}.  In the theory of
Kaplansky--Hilbert modules, strong completeness and self-duality restore
many features familiar from Hilbert spaces, including a spectral theory
for normal operators; see Kaplansky \cite{KaplanskyHilbert} and Wright
\cite{WrightSpectral}.

The conditional $L^2$-spaces associated with conditional expectation
operators on Riesz spaces provide another natural algebra-valued setting.
They arise naturally in the measure-free probability setting of Riesz spaces.  Kuo, Labuschagne and Watson established the basic theory of
conditional expectation operators, including their averaging property and
extension to natural domains \cite{KuoLabuschagneWatson}.  Labuschagne and Watson introduced the $L^2$-type setting used for discrete
stochastic integration \cite{LabuschagneWatson}.  The notion of
$T$-strong convergence was developed by Azouzi, Kuo, Ramdane and Watson
\cite{AzouziKuoRamdaneWatson}, while Azouzi and Trabelsi used functional
calculus to develop the general $L^p(T)$ scale \cite{AzouziTrabelsi}.
Strong completeness results for the natural domain and related conditional
$L^p$ spaces were obtained by Kuo, Rodda and Watson
\cite{KuoRoddaWatson}.  The strong completeness and Riesz--Fr\'echet
representation results for $L^2(T)$ needed here are due to Kalauch, Kuo and
Watson \cite{KalauchKuoWatsonStrong,KalauchKuoWatsonRF}.

Let $T$ be a strictly positive conditional expectation operator and put
$A=R(T)$.  The space $L^2(T)$ carries the $A$-valued pairing
\[
    \langle x,y\rangle_T=T(xy)
\]
and the $A$-valued norm
\[
    \|x\|_{T,2}=\bigl(T(x^2)\bigr)^{1/2}.
\]
The Riesz--Fr\'echet representation theorem of Kalauch, Kuo and Watson
\cite{KalauchKuoWatsonRF} identifies $L^2(T)$ with its $T$-strong dual.
This suggests that the classical Hilbert-space construction of adjoints
should survive in the conditional setting.

The aim of this paper is to develop this observation and to use the two
structures carried by $L^2(T)$.
We first construct adjoints for $R(T)$-linear, $T$-strongly bounded
operators, prove the involution and the $R(T)$-valued operator-norm
isometry, and distinguish order positivity from inner-product positivity.
The main point is then the comparison of two operator moduli.  The
Riesz--Kantorovich modulus belongs to the lattice order, whereas
$(S^*S)^{1/2}$ belongs to the Hilbert-type geometry.  For this comparison, we use the $T$-regular operators, for which the
Riesz--Kantorovich parts remain $T$-strongly bounded.  On this
operator lattice the adjoint preserves lattice operations, and the two
moduli can be compared without applying the adjoint outside its domain.
The resulting cross-term formula explains why the two constructions agree
in some cases, such as multiplication operators, but may differ already in
dimension two.

% ============================================================
\section{Preliminaries}
% ============================================================

\subsection*{Standing assumptions}
Throughout the paper, $E$ denotes a Dedekind complete Riesz space with
weak order unit $e$, and
\[
    T:E\longrightarrow E
\]
denotes a strictly positive conditional expectation operator such that
\[
    Te=e.
\]
Thus $T$ is, throughout, a positive order-continuous projection.  These
hypotheses are standing assumptions and will not be repeated in individual
statements.

We work in the natural domain $\Hone$ of $T$. As usual, after passing
to the natural domain, we continue to denote the extended conditional
expectation operator by $T$; the same standing assumptions and the same
weak order unit $e$ are retained.

Set
\[
    A:=\A.
\]

In the standard definition used in the Riesz-space probability literature,
a conditional expectation has Dedekind complete Riesz-subspace range; see
\cite{KuoLabuschagneWatson}.  Thus $A=R(T)$ is Dedekind complete in the
inherited lattice order.  The role of strict positivity should be kept
separate from this definition.  For instance, if one starts only from a
strictly positive positive projection, then its range is automatically a
Riesz subspace: if $y=Ty$, positivity gives $|y|\le T|y|$, while
\[
 d:=T|y|-|y|\ge0,
 \qquad Td=0;
\]
strict positivity forces $d=0$, and hence $|y|=T|y|\in R(T)$.

The multiplication used below is the one inherited from the universal
completion and the natural-domain construction.  The averaging theorem for
conditional expectations gives
\[
 T(az)=aTz,\qquad a\in R(T),
\]
whenever the products are defined, and in this framework $A=R(T)$ is a
unital Archimedean $f$-algebra with unit $e$, while $L^1(T)$ and $L^2(T)$
are $A$-modules; see \cite{KuoLabuschagneWatson,LabuschagneWatson}.

Since $A$ is Dedekind complete, it is uniformly complete.  We shall use two
standard consequences for uniformly complete unital $f$-algebras.  First,
the bounded inversion property implies that if
$a\ge\varepsilon e$ for some real $\varepsilon>0$, then $a$ is invertible,
$a^{-1}\ge0$, and
\[
 0\le a^{-1}\le \varepsilon^{-1}e.
\]
Second, every positive element of $A$ has a unique positive square root.
These facts are classical; more precisely, the bounded inversion property
and existence of positive square roots are given in
\cite[Theorems~3.4 and~3.9]{HuijsmansDePagterIdeal}; see also
\cite[Theorem~4.2 and Corollary~4.3]{BeukersHuijsmansDePagter} for the
square-root result.  They justify every scalar inverse and scalar square
root used below.

Although $R(T)$ may enjoy stronger completeness properties in the
natural-domain setting, the arguments in this paper use only its Dedekind
completeness, its unital $f$-algebra structure, and the $T$-strong
completeness of $L^2(T)$.  In particular, universal completeness is not an
additional hypothesis in any of the arguments below.

Following the conditional $L^2$ notation used in this literature, define
\[
    H:=\Htwo
    :=
    \left\{
        x\in\Hone : x^2\in\Hone
    \right\}.
\]
Then $H$ is an $A$-module.

\begin{lemma}
\label{lem:L2-Dedekind-complete}
The Riesz space $H=L^2(T)$ is Dedekind complete.
\end{lemma}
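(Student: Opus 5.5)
The plan is to show first that $H$ is an order ideal (solid Riesz subspace) of the Dedekind complete Riesz space $\Hone$, and then to use the standard fact that an ideal in a Dedekind complete Riesz space is itself Dedekind complete. Here Dedekind completeness of $\Hone$ is taken from the natural-domain theory of \cite{KuoLabuschagneWatson}: $\Hone$ is an ideal of the universal completion $E^u$ containing $E$, and $T$ extends there as a strictly positive conditional expectation. Products are computed in $E^u$, which is an $f$-algebra, so expressions like $x^2$ and $|x|^2=x^2$ make sense.

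\textbf{Step 1: $H$ is solid.} Let $x\in H$, $y\in\Hone$ and $|y|\le|x|$. In the $f$-algebra $E^u$ we have $0\le y^2=|y|^2\le|x|^2=x^2$, since multiplication of positive elements is monotone. Because $x^2\in\Hone$ and $\Hone$ is an ideal of $E^u$, it follows that $y^2\in\Hone$, and hence $y\in H$. In particular $x\in H$ implies $|x|\in H$.

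\textbf{Step 2: $H$ is a vector subspace.} Closure under real scalars is immediate from $(\lambda x)^2=\lambda^2x^2$. For sums, use the $f$-algebra inequality
\[
(x+y)^2\le 2x^2+2y^2 ,
\]
which follows from $0\le(x-y)^2=x^2-2xy+y^2$. Since $(x+y)^2\ge0$ and the right-hand side lies in $\Hone$, solidity of $\Hone$ in $E^u$ gives $(x+y)^2\in\Hone$. Also $x+y\in\Hone$ trivially, so $x+y\in H$. Combined with Step 1, $H$ is a solid vector subspace of $\Hone$, that is, an ideal, and in particular a Riesz subspace.

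\textbf{Step 3: Dedekind completeness.} Let $D\subseteq H$ be nonempty and bounded above in $H$, say by $u\in H$. Fix $d_0\in D$. Replacing $D$ by $\{d\vee d_0: d\in D\}$, we may assume $d_0\le d\le u$ for all $d\in D$. In $\Hone$ the supremum $s=\sup D$ exists by Dedekind completeness, and $d_0\le s\le u$. Then $|s|\le|d_0|+|u|\in H$, so $s\in H$ by Step 1. Because $H\subseteq\Hone$, this $s$ is also the supremum of $D$ in $H$.

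The main obstacle is bookkeeping about where products live. One must make sure that multiplication is taken in $E^u$, or in whatever $f$-algebra the paper inherits, and that the monotonicity $0\le a\le b\Rightarrow a^2\le b^2$ and the inequality $(x+y)^2\le2(x^2+y^2)$ hold there. Both are standard for Archimedean $f$-algebras: $b^2-a^2=(b-a)(b+a)\ge0$ by commutativity and positivity of products of positive elements. One must also confirm that $\Hone$ is an ideal of $E^u$, which is part of the cited natural-domain construction.
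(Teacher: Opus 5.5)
Your proof is correct and follows essentially the same route as the paper: you show that $L^2(T)$ is an order ideal of $L^1(T)$ via $0\le y^2\le x^2$ in the $f$-algebra $E^u$ together with the fact that $L^1(T)$ is an ideal of $E^u$, and you then take the supremum in $L^1(T)$ and bring it back into $L^2(T)$ by domination (the paper uses $0\le s-b_0\le u-b_0$, while you use $|s|\le|d_0|+|u|$). Your Step~2, which checks closure under addition via $(x+y)^2\le 2x^2+2y^2$, makes explicit a point the paper takes for granted when it writes $u-b_0\in L^2(T)$.
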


\begin{proof}
This fact is already contained in the early $L^2(T)$ construction of
Labuschagne and Watson \cite[Lemma~2.2]{LabuschagneWatson}.  We include the
short argument because Dedekind completeness is used essentially below in
the Riesz--Kantorovich calculus.  In the notation of that paper,
$L^1(T)=\operatorname{dom}(T)$ is an order-dense order ideal of the
universal completion $E^u$, and
\[
 L^2(T)=\operatorname{dom}_2(T)
 =\{x\in\operatorname{dom}(T):x^2\in\operatorname{dom}(T)\}.
\]
First $L^2(T)$ is an order ideal of $L^1(T)$.  Indeed, if
$y\in L^2(T)$ and $x\in L^1(T)$ satisfy $|x|\le |y|$, then, in the
$f$-algebra $E^u$,
\[
 0\le x^2=|x|^2\le |y|^2=y^2\in L^1(T).
\]
Since $L^1(T)$ is an order ideal of $E^u$, it follows that
$x^2\in L^1(T)$, and hence $x\in L^2(T)$.

Now let $B\subset L^2(T)$ be nonempty and bounded above in $L^2(T)$, say
$b\le u$ for all $b\in B$ with $u\in L^2(T)$.  Since $L^1(T)$ is
Dedekind complete, $s:=\sup_{L^1(T)}B$ exists.  Fix $b_0\in B$.  Then
\[
 0\le s-b_0\le u-b_0.
\]
Because $u-b_0\in L^2(T)$ and $L^2(T)$ is an order ideal of $L^1(T)$,
we have $s-b_0\in L^2(T)$, hence $s\in L^2(T)$.  Thus $s$ is also the
supremum of $B$ in $L^2(T)$.
\end{proof}

\begin{remark}
Labuschagne and Watson state that $\operatorname{dom}_2(T)$ is an order
ideal of $\operatorname{dom}(T)$ and is therefore Dedekind complete
\cite[Lemma~2.2]{LabuschagneWatson}.  We include the proof only because this
property is used later for regular operators and their Riesz--Kantorovich
moduli.
\end{remark}

For $x,y\in H$, define
\[
    \inner{x}{y}
    :=
    T(xy).
\]
Since $x,y\in L^2(T)$, the product $xy$ belongs to $L^1(T)$ by the
conditional Cauchy--Schwarz inequality, so the preceding expression
is well-defined.

The associated $A$-valued norm is
\[
    \norm{x}
    :=
    \bigl(T(x^2)\bigr)^{1/2}.
\]

The conditional Cauchy--Schwarz inequality reads
\begin{equation}
\label{eq:conditional-CS}
    |T(xy)|
    \leq
    \norm{x}\norm{y},
    \qquad x,y\in H.
\end{equation}
The use of the $R(T)$-valued norm and of $T$-strong convergence is
consistent with the convergence theory developed by Azouzi, Kuo, Ramdane
and Watson \cite{AzouziKuoRamdaneWatson}.  Strong completeness of the
$L^2(T)$ space in precisely this conditional norm is supplied by
Kalauch, Kuo and Watson \cite{KalauchKuoWatsonStrong}; this completeness
will be essential when polynomial approximants are used below to construct
operator square roots.

We shall also repeatedly use the averaging property
\begin{equation}
\label{eq:averaging}
    T(az)=aT(z),
    \qquad a\in A,
\end{equation}
whenever the products involved are defined.

% ============================================================
\section{The $T$-strong dual and the Riesz--Fr\'echet theorem}
% ============================================================

We recall the notion of $T$-strong boundedness used by
Kalauch, Kuo and Watson.

\begin{definition}
An additive $A$-homogeneous functional
\[
    \varphi:H\longrightarrow A
\]
is called \emph{$T$-strongly bounded} if there exists $k\in A_+$
such that
\begin{equation}
\label{eq:strong-functional}
    |\varphi(x)|
    \leq
    k\norm{x},
    \qquad x\in H.
\end{equation}

The collection of all such functionals is denoted by
\[
    \widehat H
\]
and is called the $T$-strong dual of $H$.
\end{definition}

For later use we also record explicitly the $A$-valued norm on the
$T$-strong dual.  For $\varphi\in\widehat H$ set
\[
 \|\varphi\|_{T,*}
 :=\inf\Bigl\{k\in A_+:
       |\varphi(x)|\le k\norm{x}\ \text{for every }x\in H\Bigr\}.
\]
In the Riesz--Fr\'echet theory of Kalauch--Kuo--Watson this infimum is an
admissible bound and defines an $A$-valued norm on $\widehat H$; under the
representation map below one has
\[
 \|\Psi(z)\|_{T,*}=\norm{z}.
\]

The key ingredient in what follows is the Riesz--Fr\'echet
representation theorem established by Kalauch, Kuo and Watson
\cite{KalauchKuoWatsonRF,KalauchKuoWatsonStrong}.

\begin{theorem}
\label{thm:RF}
The map
\[
    \Psi:H\longrightarrow\widehat H,
    \qquad
    \Psi(z)(x):=T(xz),
\]
is an additive, $A$-homogeneous, norm-preserving bijection.

Consequently, for every $\varphi\in\widehat H$ there exists a unique
$z\in H$ such that
\begin{equation}
\label{eq:RFrepresentation}
    \varphi(x)
    =
    T(xz)
    =
    \inner{x}{z},
    \qquad x\in H.
\end{equation}
\end{theorem}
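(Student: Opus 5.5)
The plan is the classical Riesz--Fr\'echet argument, with real scalars replaced by $A=R(T)$ and the scalar projection theorem replaced by the $T$-strong completeness of $H$ from \cite{KalauchKuoWatsonStrong}. We check the statement in three parts: $\Psi$ maps into $\widehat H$ and preserves norms, $\Psi$ is injective, and $\Psi$ is surjective. The last part carries all the difficulty.

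\emph{Well-definedness and isometry.} For $z\in H$, the map $\Psi(z)$ is additive by linearity of $T$. It is $A$-homogeneous by the averaging property \eqref{eq:averaging}, since $T((ax)z)=aT(xz)$. The conditional Cauchy--Schwarz inequality \eqref{eq:conditional-CS} gives $|\Psi(z)(x)|\le \norm{z}\,\norm{x}$, so $\Psi(z)\in\widehat H$ and $\|\Psi(z)\|_{T,*}\le\norm{z}$. Additivity and $A$-homogeneity of $\Psi$ in $z$ follow in the same way. For the reverse inequality, suppose $k\in A_+$ is an admissible bound and test on $x=z$. This gives $\norm{z}^2=T(z^2)\le k\norm{z}$. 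Put $n=\norm{z}$ and, for $\varepsilon>0$, $n_\varepsilon=n+\varepsilon e$. By bounded inversion $n_\varepsilon$ is invertible. Working in the $f$-algebra $A$, from $n^2\le kn$ we want to conclude $n\le k$. To do this, decompose along the band generated by $n$ (the band projections exist because $A$ is Dedekind complete); on that band one may divide, and off it $n=0$. So $n\le k$, hence $\|\Psi(z)\|_{T,*}=\norm{z}$. Injectivity follows at once: if $\Psi(z)=0$, then $T(z^2)=0$, and strict positivity of $T$ forces $z=0$.

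\emph{Surjectivity.} Let $\varphi\in\widehat H$ with bound $k$. I will not use an orthogonal-complement argument, since kernels in a module setting are badly behaved. Instead I plan to build the representing vector by approximation. A natural first attempt is to reduce to the bounded-coefficient case through band projections $P_j$ in $A$, acting on $H$ by multiplication with components $p_j$ of $e$. Choose the $P_j$ so that on each piece the relevant quantities are dominated by multiples of $e$, and so that the $p_j$ are pairwise disjoint with $\sup p_j=e$. On each piece the problem then resembles a genuine Hilbert space. The concrete route is to consider the real Hilbert space obtained from the scalar functional $x\mapsto \mu(T(xp_j))$ for a suitable strictly positive order-continuous functional $\mu$ on the piece. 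If such a $\mu$ is unavailable, use instead the sup over $A$-valued Gram approximations. The classical theorem then gives a $z_j$ with $\mu(\varphi(x)p_j)=\mu(T(xz_j))$. Next, upgrade this scalar identity to the $A$-valued identity by replacing $x$ with $ax$ for $a\in A$ (homogeneity) and using that $\mu$ separates points of $A$ against all multipliers $a$. Finally, glue $z=\sum_j p_j z_j$ as an order or $T$-strong limit. The bound $T(z_j^2)\le k^2p_j$, coming from the isometry part, makes the partial sums $T$-strongly Cauchy, and $T$-strong completeness puts $z$ in $H$. Uniqueness of $z$ is injectivity.

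\emph{Main obstacle.} I expect the hardest step to be the localisation in the surjectivity argument: producing enough strictly positive scalar functionals on $A$ to pass from the scalar Riesz theorem to the $A$-valued one, and controlling the gluing. If that route fails, my fallback is a direct minimisation. Set $c=\inf\{\norm{x}^2-2\varphi(x)\}$ over $x\in H$, taken in $A$ (which exists by Dedekind completeness). Choose a minimising net and show it is $T$-strongly Cauchy using the $A$-valued parallelogram law. Its $T$-strong limit $z$ then satisfies the first-order condition $\varphi(x)=T(xz)$ for all $x$.
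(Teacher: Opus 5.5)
The paper gives no proof of Theorem~\ref{thm:RF}; it imports the result from \cite{KalauchKuoWatsonRF,KalauchKuoWatsonStrong}, so your argument has to stand on its own. Your isometry and injectivity part is fine, with one small repair. ``Dividing by $n$ on its band'' needs more than Dedekind completeness of $A$. Your $n_\varepsilon$ is the right tool, or you can argue as in the proof of Lemma~\ref{lem:positive-contraction-square}: $d=(n-k)^+$ satisfies $nd=(n^2-nk)^+=0$ and $0\le d\le n$, so $d^2\le nd=0$, and semiprimeness gives $d=0$.

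The real gap is your main surjectivity route. A Dedekind complete (even universally complete) $A$ need not admit any nonzero order-continuous functional on any of its bands. For instance, $R(T)$ is of this type when $E=C(K)$, with $K$ the Stone space of the regular-open algebra of $[0,1]$, and $T=I$. The same holds for $E=C(K)\times C(K)$ with $T(u,v)=\bigl(\tfrac{u+v}2,\tfrac{u+v}2\bigr)$, where the theorem is not vacuous. In such cases there is no $\mu$ to localise with, and ``the sup over $A$-valued Gram approximations'' is not an argument. Even when a strictly positive order-continuous $\mu$ exists on a piece, the form $\mu(T(xy))$ makes that piece of $H$ a priori only a pre-Hilbert space. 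The classical theorem then produces $z_j$ only in its abstract completion, and showing $z_j\in H$ is a Fatou-type completeness statement. That statement carries the whole difficulty, and you do not supply it.

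Your fallback is the route that actually works, but as written it skips the one step that is specific to the $A$-valued setting. In $A$, the infimum of a set of values need not be the order limit of any net of those values (think of $\inf\{(0,1),(1,0)\}$ in $\mathbb R^2$). What makes a minimising net exist is downward directedness, and for $f(x):=\norm{x}^2-2\varphi(x)$ this has to be proved. Use components $p$ of $e$ in $A$. From $p^2=p$, $p(e-p)=0$, the averaging property and $A$-linearity of $\varphi$ you get $f(px+(e-p)y)=pf(x)+(e-p)f(y)$. Taking $p$ to be the component of $e$ in the band generated by $(f(y)-f(x))^+$ then gives $f(px+(e-p)y)=f(x)\wedge f(y)$.

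With this in place the rest goes through:
\begin{enumerate}[label=\textup{(\arabic*)}]
\item $f\ge -k^2$, because $f(x)\ge(\norm{x}-k)^2-k^2$, so $c$ exists in $A$.
\item Along a directed family with $f(x_\alpha)\downarrow c$, the parallelogram law gives $\tfrac12\norm{x_\alpha-x_\beta}^2\le (f(x_\alpha)-c)+(f(x_\beta)-c)$.
\item You need $T$-strong completeness for nets, not only sequences, since the directed set need not have a cofinal sequence; this gives $x_\alpha\to z$.
\item $T$-strong continuity of $f$ gives $f(z)=c$.
\item For every real $t$, $0\le f(z+tx)-f(z)=2t\bigl(T(xz)-\varphi(x)\bigr)+t^2\norm{x}^2$; with the Archimedean property this yields $\varphi(x)=T(xz)$.
\end{enumerate}
Recast this way, your proof derives the theorem from strong completeness by a variational argument, with no scalar localisation at all.
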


This representation theorem plays the role of self-duality for the
$A$-valued inner product space $H$.

% ============================================================
\section{Adjointability forces $R(T)$-linearity}
% ============================================================

First, we note that the adjoint identity itself implies module linearity.

\begin{proposition}\label{prop:automatic-linearity}
Let $S:H\to H$ and $R:H\to H$ be arbitrary maps such that
\begin{equation}\label{eq:formal-adjoint}
    \inner{Sx}{y}=\inner{x}{Ry},\qquad x,y\in H.
\end{equation}
Then $S$ and $R$ are automatically additive and $A$-homogeneous.  In
particular, they are $A$-linear.
\end{proposition}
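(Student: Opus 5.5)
The plan is to use nondegeneracy of the $A$-valued pairing: if $z\in H$ satisfies $\inner{x}{z}=0$ for every $x\in H$, then $z=0$. I would prove this first. Take $x=z$, which is allowed since $z\in H$, to get $T(z^2)=0$. Because $z^2\ge0$ and $T$ is strictly positive, this forces $z^2=0$. In the $f$-algebra $E^u$ we have $|z|^2=z^2=0$, and $E^u$ is semiprime (it is an Archimedean unital $f$-algebra), so $z=0$. Alternatively, one can apply the injectivity part of Theorem~\ref{thm:RF} to $\Psi(z)=0$. This is the only genuinely structural step; the rest is bilinear bookkeeping.

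\textbf{Additivity of $R$.} Fix $y_1,y_2\in H$. For every $x\in H$, use \eqref{eq:formal-adjoint} together with additivity of $T$ and of multiplication in the second slot:
\[
\inner{x}{R(y_1+y_2)}=\inner{Sx}{y_1+y_2}=\inner{Sx}{y_1}+\inner{Sx}{y_2}=\inner{x}{Ry_1+Ry_2}.
\]
Nondegeneracy applied to $z=R(y_1+y_2)-Ry_1-Ry_2$ then gives additivity.

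\textbf{$A$-homogeneity of $R$.} For $a\in A$, the averaging property \eqref{eq:averaging} gives
\[
\inner{x}{R(ay)}=T\bigl(Sx\cdot ay\bigr)=aT(Sx\,y)=a\inner{x}{Ry}=T\bigl(x\cdot aRy\bigr)=\inner{x}{aRy},
\]
so $R(ay)=aRy$. The argument for $S$ is symmetric: test against an arbitrary $y$ and use the symmetry $\inner{u}{v}=\inner{v}{u}$, which holds because $E^u$ is commutative.

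\textbf{Technical point.} The main thing to check is that every product in these computations is defined and lies where the averaging identity applies. Products such as $a\,y$ with $a\in A$ and $y\in H$ stay in $H$ because $H$ is an $A$-module, and products of two elements of $H$ lie in $L^1(T)$ by the conditional Cauchy--Schwarz inequality \eqref{eq:conditional-CS}. Commutativity and associativity of $E^u$ justify rewriting $Sx\cdot(ay)$ as $a(Sx\,y)$.
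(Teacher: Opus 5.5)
Your proposal is correct and follows essentially the paper's argument: pair the defect with itself, use strict positivity of $T$ to conclude it vanishes, and treat the other map by symmetry of the pairing. The differences are cosmetic. You isolate nondegeneracy as a separate lemma, treat $R$ first rather than $S$, and make explicit the semiprimeness step $z^2=0\Rightarrow z=0$, which the paper leaves implicit in its appeal to strict positivity.
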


\begin{proof}
Let $x_1,x_2,y\in H$.  From \eqref{eq:formal-adjoint},
\[
 \inner{S(x_1+x_2)-Sx_1-Sx_2}{y}=0.
\]
Taking
\[
 y=S(x_1+x_2)-Sx_1-Sx_2
\]
gives
\[
 T\!\left((S(x_1+x_2)-Sx_1-Sx_2)^2\right)=0.
\]
Strict positivity of $T$ implies
$S(x_1+x_2)=Sx_1+Sx_2$.

Now let $a\in A$.  Using the averaging property,
\[
\begin{aligned}
 \inner{S(ax)}{y}
 &=\inner{ax}{Ry}
 =a\inner{x}{Ry}
 =a\inner{Sx}{y}
 =\inner{aSx}{y}.
\end{aligned}
\]
Taking $y=S(ax)-aSx$ yields
\[
 T((S(ax)-aSx)^2)=0,
\]
and strict positivity gives $S(ax)=aSx$.  Hence $S$ is $A$-linear.
Interchanging the two variables in \eqref{eq:formal-adjoint}, using the
symmetry of the real pairing, gives the same conclusion for $R$.
\end{proof}

\begin{remark}
Thus $A$-linearity is not an artificial feature of the definition of an
adjoint.  Any pair of maps satisfying the adjoint identity is necessarily
$A$-linear.  The $A$-linearity assumption in the existence theorem below
is instead the hypothesis that ensures, for each fixed $y$, that
$x\mapsto\inner{Sx}{y}$ belongs to the $T$-strong dual to which the
Riesz--Fr\'echet theorem applies.
\end{remark}

% ============================================================
\section{$T$-strongly bounded operators}
% ============================================================

We now introduce the operator class that will be used throughout.

\begin{definition}
An $A$-linear operator
\[
    S:H\longrightarrow H
\]
is called \emph{$T$-strongly bounded} if there exists $k\in A_+$
such that
\begin{equation}
\label{eq:operator-bound}
    \norm{Sx}
    \leq
    k\norm{x},
    \qquad x\in H.
\end{equation}

We denote the collection of all such operators by
\[
    \boxed{
    \BT(H).
    }
\]
\end{definition}

It is immediate that $\BT(H)$ is closed under addition and
composition. Indeed, if
\[
    \norm{Sx}\leq k\norm{x},
    \qquad
    \norm{Rx}\leq \ell\norm{x},
\]
then
\[
    \norm{(S+R)x}
    \leq
    (k+\ell)\norm{x},
\]
and
\[
    \norm{SRx}
    \leq
    k\norm{Rx}
    \leq
    k\ell\norm{x}.
\]

Moreover, the identity operator belongs to $\BT(H)$.

Thus $\BT(H)$ is a unital algebra under composition.

% ============================================================
\section{Existence and uniqueness of adjoints}
% ============================================================

We now derive the existence of adjoints from the
Riesz--Fr\'echet representation theorem.

\begin{theorem}
\label{thm:adjoint}
Let
\[
    S\in\BT(H).
\]
Then there exists a unique operator
\[
    S^*\in\BT(H)
\]
such that
\begin{equation}
\label{eq:adjoint}
    \boxed{
    \inner{Sx}{y}
    =
    \inner{x}{S^*y},
    \qquad x,y\in H.
    }
\end{equation}

If $k\in A_+$ satisfies
\[
    \norm{Sx}\leq k\norm{x}
    \qquad(x\in H),
\]
then the same $k$ satisfies
\begin{equation}
\label{eq:adjoint-same-bound}
    \norm{S^*y}
    \leq
    k\norm{y},
    \qquad y\in H.
\end{equation}
\end{theorem}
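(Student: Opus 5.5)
The plan is to follow the classical Hilbert-space construction, using Theorem~\ref{thm:RF} in place of the Riesz representation theorem.

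\emph{Step 1: the functional.} Fix $y\in H$ and define $\varphi_y(x):=\inner{Sx}{y}=T((Sx)y)$. This is additive, because $S$ is additive and $T$ is linear. It is $A$-homogeneous, because $S$ is $A$-linear and the averaging property \eqref{eq:averaging} gives $T(aSx\,y)=aT(Sx\,y)$. For boundedness, the conditional Cauchy--Schwarz inequality \eqref{eq:conditional-CS} together with \eqref{eq:operator-bound} gives
\[
 |\varphi_y(x)|\le \norm{Sx}\norm{y}\le k\norm{y}\norm{x},
\]
where positivity of $\norm{y}$ in the $f$-algebra $A$ lets us multiply the inequality. Hence $\varphi_y\in\widehat H$ with bound $k\norm{y}\in A_+$.

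\emph{Step 2: definition and properties of $S^*$.} Theorem~\ref{thm:RF} yields a unique $z\in H$ with $\varphi_y=\Psi(z)$. Set $S^*y:=z$; then \eqref{eq:adjoint} holds by construction. $A$-linearity of $S^*$ follows either from Proposition~\ref{prop:automatic-linearity} applied to the pair $(S,S^*)$, or directly from injectivity and $A$-linearity of $\Psi$. Uniqueness also comes from strict positivity. If $R$ also satisfies \eqref{eq:adjoint}, then $\inner{x}{S^*y-Ry}=0$ for all $x$. Taking $x=S^*y-Ry$ gives $T((S^*y-Ry)^2)=0$, and strict positivity forces $S^*y=Ry$.

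\emph{Step 3: the bound.} This is where care is needed. Norm preservation in Theorem~\ref{thm:RF} gives
\[
 \norm{S^*y}=\|\Psi(S^*y)\|_{T,*}=\|\varphi_y\|_{T,*}.
\]
Since $k\norm{y}$ is an admissible bound for $\varphi_y$ and $\|\cdot\|_{T,*}$ is the infimum of the admissible bounds, we get $\norm{S^*y}\le k\norm{y}$. This proves \eqref{eq:adjoint-same-bound} and shows $S^*\in\BT(H)$.

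The main obstacle is relying on the infimum characterisation of $\|\cdot\|_{T,*}$ recorded before Theorem~\ref{thm:RF}. If one prefers to avoid it, there is a direct route. Put $z=S^*y$ and test with $x=z$:
\[
 \norm{z}^2=T(z^2)=\inner{Sz}{y}\le k\norm{y}\norm{z}.
\]
One must then cancel a factor $\norm{z}$ in $A$, which need not be invertible. To handle this, use $\norm{z}+\varepsilon e$, whose inverse exists by the bounded inversion property, or argue on the band generated by $\norm{z}$. The inequality $a^2\le ba$ with $a,b\ge0$ implies $a\le b$ in an Archimedean $f$-algebra. This follows since $(a-b)^+a=(a-b)^{+2}+b(a-b)^+\ge(a-b)^{+2}$, while multiplying $a^2\le ba$ by $(a-b)^+$-disjointness arguments yields $((a-b)^+)^2\le0$, so $(a-b)^+=0$.
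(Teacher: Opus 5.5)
Your proof is correct and follows the paper's own proof essentially step for step: you build $\varphi_y$, use the Riesz--Fr\'echet representation to define $S^*y$, and get $\norm{S^*y}\le k\norm{y}$ from norm preservation together with the infimum definition of $\|\cdot\|_{T,*}$. Citing Proposition~\ref{prop:automatic-linearity} for $A$-linearity and using strict positivity directly for uniqueness are only cosmetic variations. Your optional direct route, namely
\[
\norm{z}^2\le k\norm{y}\norm{z},\qquad a(a-b)^+=(a^2-ab)^+=0,\qquad ((a-b)^+)^2\le a(a-b)^+,
\]
followed by semiprimeness of $A$, is also sound, and it has the small advantage of not depending on the norm-preservation part of Theorem~\ref{thm:RF}.
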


\begin{proof}
Fix $y\in H$ and define
\[
    \varphi_y:H\longrightarrow A
\]
by
\[
    \varphi_y(x)
    :=
    \inner{Sx}{y}
    =
    T((Sx)y).
\]

We first show that $\varphi_y\in\widehat H$.

Since $S$ is additive, $\varphi_y$ is additive.

Let $a\in A$. Since $S$ is $A$-linear,
\[
    S(ax)=aSx.
\]
Hence, using the averaging property \eqref{eq:averaging},
\[
\begin{aligned}
    \varphi_y(ax)
    &=
    T(S(ax)y)\\
    &=
    T(a(Sx)y)\\
    &=
    aT((Sx)y)\\
    &=
    a\varphi_y(x).
\end{aligned}
\]
Thus $\varphi_y$ is $A$-homogeneous.

By the conditional Cauchy--Schwarz inequality
\eqref{eq:conditional-CS},
\[
\begin{aligned}
    |\varphi_y(x)|
    &=
    |T((Sx)y)|\\
    &\leq
    \norm{Sx}\norm{y}.
\end{aligned}
\]
If
\[
    \norm{Sx}\leq k\norm{x},
\]
then
\[
    |\varphi_y(x)|
    \leq
    k\norm{y}\norm{x}.
\]
Since
\[
    k\norm{y}\in A_+,
\]
it follows that
\[
    \varphi_y\in\widehat H.
\]

By Theorem~\ref{thm:RF}, there exists a unique element
$z_y\in H$ such that
\[
    \varphi_y(x)=T(xz_y),
    \qquad x\in H.
\]

Define
\[
    S^*y:=z_y.
\]
Then
\[
    \inner{Sx}{y}
    =
    \inner{x}{S^*y}
\]
for every $x,y\in H$.

We now prove that $S^*$ is $A$-linear.

Let $y_1,y_2\in H$. For every $x\in H$,
\[
\begin{aligned}
    \inner{x}{S^*(y_1+y_2)}
    &=
    \inner{Sx}{y_1+y_2}\\
    &=
    \inner{Sx}{y_1}
    +
    \inner{Sx}{y_2}\\
    &=
    \inner{x}{S^*y_1}
    +
    \inner{x}{S^*y_2}\\
    &=
    \inner{x}{S^*y_1+S^*y_2}.
\end{aligned}
\]
The uniqueness part of the Riesz--Fr\'echet theorem yields
\[
    S^*(y_1+y_2)
    =
    S^*y_1+S^*y_2.
\]

Similarly, if $a\in A$, then
\[
\begin{aligned}
    \inner{x}{S^*(ay)}
    &=
    \inner{Sx}{ay}\\
    &=
    a\inner{Sx}{y}\\
    &=
    a\inner{x}{S^*y}\\
    &=
    \inner{x}{aS^*y}.
\end{aligned}
\]
Again by uniqueness,
\[
    S^*(ay)=aS^*y.
\]
Thus $S^*$ is $A$-linear.

It remains to prove that $S^*$ is $T$-strongly bounded.

By the norm-preserving part of Theorem~\ref{thm:RF}, the representing
element $S^*y$ has the same $A$-valued norm as the functional
$\varphi_y$. Since
\[
    |\varphi_y(x)|
    \leq
    k\norm{y}\norm{x},
\]
the definition of the $T$-strong dual norm yields
\[
    \norm{S^*y}
    \leq
    k\norm{y}.
\]
Hence $S^*\in\BT(H)$.

Finally, suppose that $R:H\to H$ also satisfies
\[
    \inner{Sx}{y}
    =
    \inner{x}{Ry}
\]
for every $x,y\in H$. Then
\[
    \inner{x}{S^*y}
    =
    \inner{x}{Ry}
\]
for every $x\in H$. Thus $S^*y$ and $Ry$ represent the same
functional on $H$. By uniqueness in Theorem~\ref{thm:RF},
\[
    S^*y=Ry.
\]
Since $y$ is arbitrary,
\[
    S^*=R.
\]
\end{proof}

% ============================================================
\section{The involution on $\mathcal B_T(H)$}
% ============================================================

We now establish the elementary algebraic properties of the adjoint.

\begin{proposition}
\label{prop:adjoint-properties}
Let $S,R\in\BT(H)$ and $a\in A$. Then
\begin{enumerate}[label=\textup{(\roman*)}]
    \item
    \[
        (S+R)^*=S^*+R^*;
    \]

    \item
    \[
        (aS)^*=aS^*;
    \]

    \item
    \[
        (SR)^*=R^*S^*;
    \]

    \item
    \[
        (S^*)^*=S;
    \]

    \item
    \[
        I^*=I.
    \]
\end{enumerate}
\end{proposition}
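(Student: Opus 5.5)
All five identities will follow from the uniqueness clause of Theorem~\ref{thm:adjoint}. In each case I first check that the operator on the left lies in $\BT(H)$; this holds because $\BT(H)$ is closed under sums and compositions and contains $I$. I also note that $aS\in\BT(H)$, since
\[
\norm{aSx}=|a|\,\norm{Sx}\le |a|k\norm{x},
\]
using that $(a^2z^2)^{1/2}=|a|(z^2)^{1/2}$ in the $f$-algebra together with averaging. It then suffices to exhibit a map $R$ satisfying $\inner{(\cdot)x}{y}=\inner{x}{Ry}$ for all $x,y$; uniqueness then forces $R$ to be the adjoint.

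For (i), bilinearity of $T(xy)$ gives
\[
\inner{(S+R)x}{y}=\inner{x}{S^*y}+\inner{x}{R^*y}=\inner{x}{(S^*+R^*)y}.
\]
For (ii), the averaging property \eqref{eq:averaging} gives
\[
\inner{aSx}{y}=a\inner{Sx}{y}=a\inner{x}{S^*y}=\inner{x}{aS^*y}.
\]
For (iii), applying the adjoint identity twice gives
\[
\inner{SRx}{y}=\inner{Rx}{S^*y}=\inner{x}{R^*S^*y}.
\]
For (v), the identity $\inner{Ix}{y}=\inner{x}{Iy}$ is trivial.

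For (iv), Theorem~\ref{thm:adjoint} already gives $S^*\in\BT(H)$, so $(S^*)^*$ exists and is unique. By symmetry of the pairing, $\inner{x}{y}=T(xy)=T(yx)=\inner{y}{x}$. Hence
\[
\inner{S^*x}{y}=\inner{y}{S^*x}=\inner{Sy}{x}=\inner{x}{Sy},
\]
and uniqueness yields $(S^*)^*=S$.

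No step is a serious obstacle. The only care needed is to confirm membership in $\BT(H)$ before invoking uniqueness, and, for (ii), to note the bound $|a|k$. Alternatively, Proposition~\ref{prop:automatic-linearity} together with the Riesz--Fr\'echet uniqueness in Theorem~\ref{thm:RF} gives uniqueness directly for any map satisfying the identity. So the argument is really just bookkeeping with uniqueness.
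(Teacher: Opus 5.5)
Your proposal is correct and takes essentially the same approach as the paper: each identity comes from computing the pairing and invoking the uniqueness clause of Theorem~\ref{thm:adjoint}, with (iv) obtained from the symmetry $T(xy)=T(yx)$. Your explicit check that $aS\in\BT(H)$, with admissible bound $|a|k$, is a small point the paper leaves implicit, and you handle it correctly.
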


\begin{proof}
For $x,y\in H$,
\[
\begin{aligned}
    \inner{(S+R)x}{y}
    &=
    \inner{Sx}{y}
    +
    \inner{Rx}{y}\\
    &=
    \inner{x}{S^*y+R^*y}.
\end{aligned}
\]
Uniqueness of the adjoint gives
\[
    (S+R)^*=S^*+R^*.
\]

Similarly,
\[
\begin{aligned}
    \inner{(aS)x}{y}
    &=
    a\inner{Sx}{y}\\
    &=
    a\inner{x}{S^*y}\\
    &=
    \inner{x}{aS^*y},
\end{aligned}
\]
so
\[
    (aS)^*=aS^*.
\]

For composition,
\[
\begin{aligned}
    \inner{SRx}{y}
    &=
    \inner{Rx}{S^*y}\\
    &=
    \inner{x}{R^*S^*y}.
\end{aligned}
\]
Hence
\[
    (SR)^*=R^*S^*.
\]

To prove the involution identity, note that the inner product is
symmetric in the real setting:
\[
    \inner{x}{y}=\inner{y}{x}.
\]
Thus
\[
\begin{aligned}
    \inner{S^*x}{y}
    &=
    \inner{y}{S^*x}\\
    &=
    \inner{Sy}{x}\\
    &=
    \inner{x}{Sy}.
\end{aligned}
\]
Therefore $S$ is an adjoint of $S^*$. By uniqueness,
\[
    (S^*)^*=S.
\]

Finally,
\[
    \inner{Ix}{y}
    =
    \inner{x}{Iy},
\]
hence
\[
    I^*=I.
\]
\end{proof}

\begin{corollary}
The map
\[
    *:\BT(H)\longrightarrow\BT(H),
    \qquad
    S\longmapsto S^*,
\]
is an involutive anti-automorphism of the unital algebra
$\BT(H)$.
\end{corollary}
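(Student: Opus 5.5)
The corollary follows by assembling Theorem~\ref{thm:adjoint} with Proposition~\ref{prop:adjoint-properties}; the only work is to check each clause of ``involutive anti-automorphism of the unital algebra'' against the results already proved.

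First, the map is well defined with values in $\BT(H)$. For $S\in\BT(H)$, Theorem~\ref{thm:adjoint} gives a unique $S^*\in\BT(H)$, so $S\mapsto S^*$ is a genuine map $\BT(H)\to\BT(H)$. The bound \eqref{eq:adjoint-same-bound} shows that no operator leaves the class.

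Second, the algebraic identities come directly from Proposition~\ref{prop:adjoint-properties}.
\begin{enumerate}[label=\textup{(\alph*)}]
\item Additivity is (i).
\item Homogeneity is (ii), for $A$-scalars and hence also for real scalars $\lambda$, since $\lambda e\in A$ and $(\lambda e)S=\lambda S$.
\item Reversal of products, $(SR)^*=R^*S^*$, is (iii).
\item Preservation of the unit, $I^*=I$, is (v).
\end{enumerate}
Together these make $*$ an anti-homomorphism of unital algebras.

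Third, I use (iv), $(S^*)^*=S$, to show that $*$ composed with itself is the identity on $\BT(H)$. Hence $*$ is its own two-sided inverse, so it is bijective, and therefore an anti-automorphism that is involutive.

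I do not expect a real obstacle. The one point to state explicitly is that (iv) applies because $S^*\in\BT(H)$ by Theorem~\ref{thm:adjoint}, so $(S^*)^*$ is defined. Bijectivity then follows from involutivity alone, with no separate injectivity or surjectivity argument needed.
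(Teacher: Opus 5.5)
Your proposal is correct and takes the same approach as the paper, which gives no separate proof and treats the corollary as immediate from Theorem~\ref{thm:adjoint} (the adjoint lands in $\BT(H)$) and items (i)--(v) of Proposition~\ref{prop:adjoint-properties}. As you observe, bijectivity comes from $(S^*)^*=S$ alone. Your remark that real homogeneity follows via $\lambda e\in A$ is a harmless extra detail that the paper leaves implicit.
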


% ============================================================
\section{The $R(T)$-valued operator norm}
% ============================================================

We next introduce the natural operator norm.

For $S\in\BT(H)$, define the set of admissible bounds
\[
    \mathcal M(S)
    :=
    \left\{
        k\in A_+:
        \norm{Sx}\leq k\norm{x}
        \text{ for all }x\in H
    \right\}.
\]
By definition of $\BT(H)$,
\[
    \mathcal M(S)\neq\varnothing.
\]

Since $A=R(T)$ is Dedekind complete, the infimum
\[
    \inf\mathcal M(S)
\]
exists in $A_+$.

\begin{definition}
For $S\in\BT(H)$, define
\[
    \boxed{
    \opnorm{S}
    :=
    \inf\mathcal M(S).
    }
\]
\end{definition}

The analogous infimum construction for the $T$-strong dual of
$L^2(T)$ is used by Kalauch--Kuo--Watson, where it is proved to give an
$R(T)$-valued norm and, importantly, the infimum is itself an admissible
bound.  We record the operator version, since this fact will be used
repeatedly below.

\begin{proposition}
\label{prop:operator-norm-is-norm}
For every $S\in\BT(H)$ one has
\[
    \norm{Sx}\leq \opnorm{S}\,\norm{x},
    \qquad x\in H.
\]
In particular, $\opnorm{S}\in\mathcal M(S)$.  Moreover,
$S\mapsto\opnorm{S}$ is an $A$-valued norm on $\BT(H)$; that is,
for $R,S\in\BT(H)$ and $a\in A$,
\[
\begin{aligned}
    &\opnorm{S}\geq0,\qquad
      \opnorm{S}=0\Longleftrightarrow S=0,\\
    &\opnorm{aS}=|a|\,\opnorm{S},\\
    &\opnorm{R+S}\leq\opnorm{R}+\opnorm{S}.
\end{aligned}
\]
It is also submultiplicative:
\[
    \opnorm{RS}\leq\opnorm{R}\,\opnorm{S}.
\]
\end{proposition}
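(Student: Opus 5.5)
The central claim is that the infimum $\opnorm{S}$ is itself an admissible bound. Once this is known, the norm axioms follow by routine manipulations.

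For admissibility, I would reduce to the dual-norm statement already recorded after the definition of $\widehat H$: $\|\Psi(z)\|_{T,*}=\norm{z}$, and the infimum defining $\|\cdot\|_{T,*}$ is admissible. Fix $x\in H$ and apply this to $z=Sx$, so that
\[
\norm{Sx}=\inf\{m\in A_+:|T(ySx)|\le m\norm{y}\ \forall y\}.
\]
This alone does not produce the uniform bound, so I would argue directly via band projections in the Dedekind complete $f$-algebra $A$. Put $k_0=\opnorm{S}$ and suppose $\norm{Sx}\le k_0\norm{x}$ fails for some $x$. Then $w:=(\norm{Sx}-k_0\norm{x})^+\neq0$.

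Choose a real $\varepsilon>0$ and a nonzero band projection $P$ in $A$ (acting on $H$ by multiplication by the component $Pe$, which is legitimate since $Pe\in A$ is an idempotent) such that
\[
P\norm{Sx}\ge P(k_0+\varepsilon e)\norm{x}.
\]
Since $\mathcal M(S)$ is closed under finite infima, a step I expect to verify by splitting via the band generated by $(k_1-k_2)^+$ and using $A$-linearity of $S$ together with $\norm{px}=p\norm{x}$ for idempotents $p$, the set $\mathcal M(S)$ is downward directed. Hence $k_\alpha\downarrow k_0$ in order. Order convergence in $A$ combined with multiplication by $\norm{x}$, which is order continuous in the $f$-algebra, gives
\[
(k_\alpha\norm{x}-k_0\norm{x})\downarrow0.
\]
Then $P\norm{Sx}\le Pk_\alpha\norm{x}$ for every $\alpha$, and passing to the limit yields $P\norm{Sx}\le Pk_0\norm{x}$. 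This contradicts the choice of $P$, after possibly shrinking $P$ so that $P\norm{x}\ge\delta Pe$ for some real $\delta>0$, which is possible by Freudenthal-type approximation in Dedekind complete spaces. Producing such a $P$ with $\varepsilon,\delta>0$ from $w\ne0$ is the main technical obstacle.

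A simpler alternative avoids $P$ entirely: show that $\inf_\alpha k_\alpha\norm{x}=k_0\norm{x}$ by order continuity of multiplication. Then $\norm{Sx}\le k_\alpha\norm{x}$ for all $\alpha$ gives $\norm{Sx}\le\inf_\alpha k_\alpha\norm{x}=k_0\norm{x}$ directly. I would try this route first. Its key ingredients are the directedness of $\mathcal M(S)$ and the order continuity of multiplication by a positive element in an Archimedean $f$-algebra; the latter is standard, since multiplication operators are orthomorphisms.

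The remaining properties then follow quickly.
\begin{itemize}
\item Positivity holds by construction.
\item If $\opnorm{S}=0$, admissibility gives $\norm{Sx}=0$, so $T((Sx)^2)=0$, and strict positivity yields $Sx=0$.
\item The triangle inequality holds because $\opnorm{R}+\opnorm{S}\in\mathcal M(R+S)$. Submultiplicativity holds because $\norm{RSx}\le\opnorm{R}\opnorm{S}\norm{x}$.
\item For homogeneity, first note $\norm{ax}=|a|\norm{x}$, since $T(a^2x^2)=a^2T(x^2)$ by averaging and square roots are unique. Hence $|a|\opnorm{S}\in\mathcal M(aS)$, giving $\le$.
\item For the reverse inequality in homogeneity, take any $m\in\mathcal M(aS)$ and decompose $e$ along the carrier band of $a$. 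On the band where $|a|\ge\varepsilon e$, bounded inversion shows that $|a|^{-1}m$ is admissible for $S$ there. Taking the supremum over $\varepsilon\downarrow0$ and using disjointness off the carrier of $a$ gives $|a|\opnorm{S}\le m$.
\end{itemize}
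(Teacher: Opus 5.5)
Your proposal is correct. Your ``simpler alternative'' for admissibility is essentially the paper's argument. The paper fixes $x$, notes that $\norm{Sx}\le k\norm{x}$ for every $k\in\mathcal M(S)$, and uses order continuity of multiplication by $\norm{x}\ge0$ to pass to $\inf\mathcal M(S)$.

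Your band-splitting proof that $\mathcal M(S)$ is closed under finite infima works, but it is more than you need. Multiplication by $\norm{x}$ is a lattice homomorphism of the $f$-algebra $A$, so $(k_1\wedge k_2)\norm{x}=k_1\norm{x}\wedge k_2\norm{x}\ge\norm{Sx}$ immediately. The band-projection contradiction route is also redundant, since its own limit step already gives the bound globally without $P$.

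The genuine difference is in the reverse homogeneity inequality. You localize to the bands where $|a|\ge\varepsilon e$ and invert $|a|$ there. You then patch with an admissible bound off the band to get $p_\varepsilon|a|\,\opnorm{S}\le p_\varepsilon m$, and let $p_\varepsilon|a|\uparrow|a|$. This is valid. However, it needs the projection property of $A$, a Freudenthal-type approximation, and a bandwise patching step for admissible bounds, which you only sketch.

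The paper avoids all of this. After reducing to $a\ge0$, it puts $c=a+\varepsilon e$, which is invertible with $c^{-1}\ge0$. Applying the easy inequality to $cS$ and to $c^{-1}(cS)$ gives $\opnorm{cS}=c\,\opnorm{S}$ exactly. The triangle inequality then gives $c\,\opnorm{S}\le\opnorm{aS}+\varepsilon\opnorm{S}$, and cancelling $\varepsilon\opnorm{S}$ yields $a\,\opnorm{S}\le\opnorm{aS}$ with no limit at all.

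Your route makes the band-by-band nature of $\opnorm{\cdot}$ explicit. The paper's route is shorter and uses only bounded inversion and the triangle inequality it has just proved.
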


\begin{proof}
Put $m=\inf\mathcal M(S)$.  Fix $x\in H$ and write
$b=\norm{x}\in A_+$.  For every $k\in\mathcal M(S)$,
\[
    \norm{Sx}\leq kb.
\]
Multiplication by the fixed positive element $b$ is order continuous in
$A$; hence it preserves infima of nonempty lower-bounded sets.  Therefore
\[
    \norm{Sx}
    \leq \inf_{k\in\mathcal M(S)}kb
    =\left(\inf_{k\in\mathcal M(S)}k\right)b
    =m\norm{x}.
\]
Thus $m\in\mathcal M(S)$, proving the asserted optimal-bound property.

Positivity is immediate.  If $\opnorm{S}=0$, the optimal-bound property
gives $\norm{Sx}=0$ for every $x\in H$.  Since $\norm{\cdot}$ is an
$A$-valued norm on $H$, $Sx=0$ for every $x$, and hence $S=0$.  The
converse is clear.

For $x\in H$,
\[
\begin{aligned}
    \norm{(R+S)x}
    &\leq \norm{Rx}+\norm{Sx}\\
    &\leq (\opnorm{R}+\opnorm{S})\norm{x},
\end{aligned}
\]
so the optimality of the infimum gives
\[
    \opnorm{R+S}\leq\opnorm{R}+\opnorm{S}.
\]

Let $a\in A$.  Since
\[
    \norm{aSx}=|a|\,\norm{Sx},
\]
we have $\mathcal M(aS)=\mathcal M(|a|S)$, and hence it is enough to
consider $a\geq0$.  The optimal-bound property gives immediately
\[
    \opnorm{aS}\leq a\,\opnorm{S}.
\]
To obtain the reverse inequality without assuming universal completeness,
fix $\varepsilon>0$ and put
$c=a+\varepsilon e$.  Since $c\geq\varepsilon e$, the element $c$ is
invertible in the uniformly complete unital Archimedean $f$-algebra $A$,
and $c^{-1}\geq0$.  Applying the preceding inequality first to $cS$ and
then to $S=c^{-1}(cS)$ gives
\[
    \opnorm{cS}=c\,\opnorm{S}.
\]
On the other hand, by the triangle inequality established above and the
already established inequality for scalar multiplication,
\[
    c\,\opnorm{S}
    =\opnorm{(a+\varepsilon e)S}
    \leq \opnorm{aS}+\varepsilon\opnorm{S}.
\]
Cancelling the common term $\varepsilon\opnorm{S}$ in the ordered vector
space $A$ yields
\[
    a\,\opnorm{S}\leq\opnorm{aS}.
\]
Thus $\opnorm{aS}=a\opnorm{S}$ for $a\geq0$, and therefore
$\opnorm{aS}=|a|\opnorm{S}$ for arbitrary $a\in A$.

Finally,
\[
    \norm{RSx}
    \leq \opnorm{R}\norm{Sx}
    \leq \opnorm{R}\opnorm{S}\norm{x},
\]
and hence $\opnorm{RS}\leq\opnorm{R}\opnorm{S}$.
\end{proof}

\begin{remark}
The preceding proposition is why we may legitimately call
$\opnorm{\cdot}$ the $R(T)$-valued operator norm.  The point is not merely
terminological: the defining infimum is an admissible bound.  Compare the
corresponding construction for the $T$-strong dual in
Kalauch--Kuo--Watson, where the same phenomenon is established for
$R(T)$-valued bounded functionals.
\end{remark}

The existence theorem for adjoints also shows directly that the adjoint
preserves the operator norm.

\begin{theorem}
\label{thm:adjoint-norm}
For every $S\in\BT(H)$,
\[
    \boxed{
    \opnorm{S^*}
    =
    \opnorm{S}.
    }
\]
\end{theorem}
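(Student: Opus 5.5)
The equality follows from two inequalities between the admissible-bound sets $\mathcal M(S)$ and $\mathcal M(S^*)$, using Theorem~\ref{thm:adjoint} together with the involution $(S^*)^*=S$ of Proposition~\ref{prop:adjoint-properties}(iv).

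First, Theorem~\ref{thm:adjoint} says that every $k\in\mathcal M(S)$ is also an admissible bound for $S^*$; that is, $\mathcal M(S)\subseteq\mathcal M(S^*)$. An infimum over a larger set can only be smaller, so
\[
\opnorm{S^*}=\inf\mathcal M(S^*)\le\inf\mathcal M(S)=\opnorm{S}.
\]
Alternatively, by Proposition~\ref{prop:operator-norm-is-norm} the element $\opnorm{S}$ itself lies in $\mathcal M(S)$. Theorem~\ref{thm:adjoint} then puts it in $\mathcal M(S^*)$, and the definition of the infimum gives the same inequality.

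Second, since $S^*\in\BT(H)$, we may apply the first step to $S^*$ in place of $S$. This yields $\opnorm{(S^*)^*}\le\opnorm{S^*}$. By Proposition~\ref{prop:adjoint-properties}(iv), $(S^*)^*=S$, so $\opnorm{S}\le\opnorm{S^*}$. Antisymmetry of the order on $A$ then gives the equality. In fact the argument shows $\mathcal M(S)=\mathcal M(S^*)$.

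I do not expect a real obstacle here. The only substantive input is the bound-transfer statement \eqref{eq:adjoint-same-bound}, which rests on the norm-preserving part of Theorem~\ref{thm:RF} and on the fact that $\|\cdot\|_{T,*}$ is attained as an admissible bound. That bound-transfer has already been established in Theorem~\ref{thm:adjoint}, so the proof reduces to this symmetric two-line argument.
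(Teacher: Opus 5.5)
Your proposal is correct and follows the paper's proof: Theorem~\ref{thm:adjoint} gives $\mathcal M(S)\subseteq\mathcal M(S^*)$, hence $\opnorm{S^*}\le\opnorm{S}$, and applying the same step to $S^*$ together with $(S^*)^*=S$ gives the reverse inequality. One small correction to your closing commentary: the bound transfer in Theorem~\ref{thm:adjoint} only uses that $\|\varphi_y\|_{T,*}$ lies below the admissible bound $k\norm{y}$, which holds by definition of the infimum, so it does not need the infimum to be attained.
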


\begin{proof}
Let $k\in\mathcal M(S)$.  Thus
\[
    \norm{Sx}\leq k\norm{x},\qquad x\in H.
\]
By Theorem~\ref{thm:adjoint}, the same $k$ satisfies
\[
    \norm{S^*y}\leq k\norm{y},\qquad y\in H.
\]
Hence $k\in\mathcal M(S^*)$, and therefore
\[
    \opnorm{S^*}\leq\opnorm{S}.
\]
Applying the same argument to $S^*$ and using $(S^*)^*=S$ gives
\[
    \opnorm{S}\leq\opnorm{S^*}.
\]
The two inequalities yield the result.
\end{proof}

% ============================================================
\section{Order structure and $T$-regular operators}
% ============================================================

The Riesz-space order on $H=L^2(T)$ provides notions that have no direct
counterpart in an abstract Hilbert space.  We first distinguish order
positivity from positivity defined by the $A$-valued inner product and then
identify the operator class on which the Riesz--Kantorovich lattice
operations are compatible with $T$-strong boundedness.

\begin{definition}
An operator $S\in\BT(H)$ is called \emph{order positive}, written
$S\geq_o0$, if
\[
    x\geq0\quad\Longrightarrow\quad Sx\geq0.
\]
It is called \emph{inner-product positive}, written $S\geq_h0$, if
\[
    \inner{Sx}{x}\geq0,\qquad x\in H.
\]
It is \emph{self-adjoint} if $S^*=S$.
\end{definition}

The following separation observation will be used repeatedly.

\begin{lemma}\label{lem:positive-separation}
For $z\in H$, the following are equivalent:
\begin{enumerate}[label=\textup{(\roman*)}]
\item $z\geq0$;
\item $T(xz)\geq0$ for every $x\in H_+$.
\end{enumerate}
\end{lemma}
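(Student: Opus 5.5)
The implication (i)$\Rightarrow$(ii) is immediate and will be done first.  If $z\ge0$ and $x\in H_+$, then $xz\ge0$ in the $f$-algebra $E^u$, since the product of positive elements of an $f$-algebra is positive.  The product $xz$ lies in $L^1(T)$ by the conditional Cauchy--Schwarz inequality, and positivity of $T$ then gives $T(xz)\ge0$.

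For (ii)$\Rightarrow$(i) I will test (ii) against the negative part of $z$.  Put $x:=z^-$.  Since $|z^-|\le|z|$ and $L^2(T)$ is an order ideal of $L^1(T)$ (shown in the proof of Lemma~\ref{lem:L2-Dedekind-complete}), we have $x\in H_+$, so the test vector is admissible.  Next write $z=z^+-z^-$ and use $z^+\perp z^-$.  In an $f$-algebra disjoint elements have zero product, so $z^+z^-=0$.  Hence
\[
 xz = z^-z^+ - (z^-)^2 = -(z^-)^2 .
\]
Hypothesis (ii) then gives $T\bigl(-(z^-)^2\bigr)\ge0$, that is, $T\bigl((z^-)^2\bigr)\le0$.

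On the other hand $(z^-)^2\ge0$, so positivity of $T$ forces $T\bigl((z^-)^2\bigr)=0$.  Strict positivity of $T$ then yields $(z^-)^2=0$.  In the Archimedean $f$-algebra $E^u$, and indeed in any semiprime $f$-algebra, $w^2=0$ implies $w=0$.  Therefore $z^-=0$ and $z\ge0$.

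The only step needing care is justifying the $f$-algebra facts used for the multiplication inherited from $E^u$: positivity of products of positive elements, $z^+z^-=0$ for disjoint elements, and $w^2=0\Rightarrow w=0$.  These are standard in Archimedean $f$-algebras, and the universal completion $E^u$ is such an algebra, so I will simply cite them.  Everything else uses only the positivity and strict positivity of $T$ from the standing assumptions, together with the ideal property of $L^2(T)$.
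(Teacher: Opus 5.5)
Your proof is correct and follows the paper's argument: test (ii) against $x=z^-$, use $z^+z^-=0$ to get $T(z^-z)=-T((z^-)^2)$, and conclude by strict positivity of $T$. You are slightly more careful than the paper, which goes directly from $T((z^-)^2)=0$ to $z^-=0$ without stating the semiprimeness step $(z^-)^2=0\Rightarrow z^-=0$ that you supply.
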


\begin{proof}
The implication (i)$\Rightarrow$(ii) follows from positivity of $T$.
Conversely, suppose that $T(xz)\geq0$ for every $x\geq0$.  Taking
$x=z^-$ gives
\[
 z^-z=-\,(z^-)^2,
\]
because $z^+\perp z^-$.  Hence
\[
 0\leq T(z^-z)=-T((z^-)^2)\leq0.
\]
Thus $T((z^-)^2)=0$.  Strict positivity of $T$ gives $z^-=0$, and
therefore $z\geq0$.
\end{proof}

\begin{theorem}
\label{thm:order-positive-adjoint}
For every $S\in\BT(H)$,
\[
    \boxed{S\geq_o0\quad\Longleftrightarrow\quad S^*\geq_o0.}
\]
\end{theorem}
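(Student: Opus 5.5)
We prove only the forward implication $S\geq_o0\Rightarrow S^*\geq_o0$. The converse then follows by applying this to $S^*\in\BT(H)$ and using the involution $(S^*)^*=S$ from Proposition~\ref{prop:adjoint-properties}(iv).

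Assume $S\geq_o0$ and fix $y\in H_+$. We want to show $S^*y\geq0$. By Lemma~\ref{lem:positive-separation}, it suffices to check that $T(xS^*y)\geq0$ for every $x\in H_+$. By the adjoint identity \eqref{eq:adjoint} of Theorem~\ref{thm:adjoint}, and since the real pairing is symmetric,
\[
 T(xS^*y)=\inner{x}{S^*y}=\inner{Sx}{y}=T\bigl((Sx)y\bigr).
\]
Now $x\geq0$ gives $Sx\geq0$ by order positivity, and $y\geq0$. The product of two positive elements in the $f$-algebra $E^u$ is positive, so $(Sx)y\geq0$ in $L^1(T)$. Positivity of $T$ then yields $T((Sx)y)\geq0$. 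Hence $S^*y\geq0$, and since $y\in H_+$ was arbitrary, $S^*\geq_o0$.

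There is no real obstacle here. The only point needing care is that the product $(Sx)y$ lies in $L^1(T)$ and is positive. Membership follows from the conditional Cauchy--Schwarz inequality, and positivity from the $f$-algebra structure of the universal completion, where positive elements have positive products. The separation step is exactly what Lemma~\ref{lem:positive-separation} provides, and it relies on strict positivity of $T$.
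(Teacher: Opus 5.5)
Your proposal is correct and follows essentially the same route as the paper. Like the paper, you test $S^*y$ against $x\in H_+$ via Lemma~\ref{lem:positive-separation} and the adjoint identity $T(xS^*y)=T((Sx)y)\ge0$, then get the converse from $(S^*)^*=S$. Your extra justification that $(Sx)y\ge0$, via the $f$-algebra structure, fills in a step the paper leaves implicit.
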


\begin{proof}
Assume $S\geq_o0$ and let $y\in H_+$.  For every $x\in H_+$,
\[
 T(xS^*y)=\inner{x}{S^*y}=\inner{Sx}{y}=T((Sx)y)\geq0.
\]
Lemma~\ref{lem:positive-separation} gives $S^*y\geq0$.  Hence
$S^*\geq_o0$.  Applying the same implication to $S^*$ and using
$(S^*)^*=S$ gives the converse.
\end{proof}

\begin{remark}
There is no immediate analogue of Theorem~\ref{thm:order-positive-adjoint}
for the naive condition $x>0\Rightarrow Sx>0$.  Even when $Sx>0$ and
$y>0$, the product $(Sx)y$ may vanish if the two elements are disjoint.
Any transfer theorem for a stronger positivity notion would therefore
require additional support or irreducibility hypotheses.
\end{remark}

\begin{proposition}\label{prop:SstarS-positive}
For every $S\in\BT(H)$, the operator $S^*S$ is self-adjoint and
inner-product positive.  More precisely,
\[
    \inner{S^*Sx}{x}=\norm{Sx}^{2}\geq0,
    \qquad x\in H.
\]
Moreover,
\[
    \ker(S^*S)=\ker S.
\]
\end{proposition}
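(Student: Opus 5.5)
The plan is to derive all three assertions directly from the algebraic properties of the adjoint in Proposition~\ref{prop:adjoint-properties} and from strict positivity of $T$.

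\emph{Self-adjointness.} By Theorem~\ref{thm:adjoint}, $S^*\in\BT(H)$, and $\BT(H)$ is closed under composition, so $S^*S\in\BT(H)$ and its adjoint exists. Proposition~\ref{prop:adjoint-properties}(iii) and (iv) then give
\[
 (S^*S)^*=S^*(S^*)^*=S^*S .
\]

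\emph{The identity $\inner{S^*Sx}{x}=\norm{Sx}^2$.} I would fix $x\in H$ and use symmetry of the real pairing together with the adjoint identity \eqref{eq:adjoint}:
\[
 \inner{S^*Sx}{x}=\inner{x}{S^*(Sx)}=\inner{Sx}{Sx}=T((Sx)^2).
\]
By definition of the $A$-valued norm, $\norm{Sx}^2=\bigl(T((Sx)^2)\bigr)^{1/2}\cdot\bigl(T((Sx)^2)\bigr)^{1/2}=T((Sx)^2)$. This uses that the positive square root in the $f$-algebra $A$ squares back to the original element. Positivity follows because $(Sx)^2\ge0$ and $T$ is positive, so $S^*S\geq_h0$.

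\emph{Kernels.} The inclusion $\ker S\subseteq\ker(S^*S)$ is trivial. Conversely, if $S^*Sx=0$, the previous identity gives $T((Sx)^2)=\inner{S^*Sx}{x}=0$. Strict positivity of $T$, applied to $(Sx)^2\ge0$, yields $(Sx)^2=0$. Then $|Sx|^2=0$, and in the Archimedean (semiprime) $f$-algebra $E^u$ this forces $Sx=0$. Alternatively one can simply invoke that $\norm{\cdot}$ is an $A$-valued norm, as in the proof of Proposition~\ref{prop:operator-norm-is-norm}.

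No step is a real obstacle. The only points needing care are the identification $\norm{y}^2=T(y^2)$ via uniqueness and definition of the square root, and the passage from $y^2=0$ to $y=0$, which rests on semiprimeness of $E^u$. Both are standard, and the same reasoning was already used in Proposition~\ref{prop:automatic-linearity}.
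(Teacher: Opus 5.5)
Your proposal is correct and follows essentially the same route as the paper. You get self-adjointness from $(S^*S)^*=S^*(S^*)^*=S^*S$, the identity $\inner{S^*Sx}{x}=\inner{Sx}{Sx}=T((Sx)^2)=\norm{Sx}^2$ from the adjoint identity, and the nontrivial kernel inclusion from strict positivity of $T$. Your extra care about $\norm{y}^2=T(y^2)$ and the passage from $(Sx)^2=0$ to $Sx=0$ via semiprimeness just makes explicit what the paper leaves implicit.
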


\begin{proof}
Using Proposition~\ref{prop:adjoint-properties},
\[
    (S^*S)^*=S^*(S^*)^*=S^*S.
\]
Furthermore,
\[
    \inner{S^*Sx}{x}=\inner{Sx}{Sx}
    =T((Sx)^2)=\norm{Sx}^{2}\geq0.
\]
If $Sx=0$, then clearly $S^*Sx=0$.  Conversely, if $S^*Sx=0$, then
$T((Sx)^2)=0$, and strict positivity of $T$ yields $Sx=0$.
\end{proof}

\begin{remark}
Inner-product positivity of $S^*S$ is not the same as order positivity.
The inequality $\inner{S^*Sx}{x}\geq0$ for all $x$ does not imply
$x\geq0\Rightarrow S^*Sx\geq0$.
\end{remark}

\subsection{Riesz--Kantorovich operations and module linearity}

Since $H$ is Dedekind complete, the classical Riesz--Kantorovich theorem
makes the real vector space $L_b(H,H)$ of order-bounded real-linear
operators into a Dedekind complete vector lattice.  For $R\in L_b(H,H)$
and $x\in H_+$,
\begin{equation}
\label{eq:RK-modulus}
 |R|_{\rm ord}x=\sup\{\,|Ru|:|u|\le x\,\}.
\end{equation}
See \cite[Theorem~1.16]{AbramovichAliprantis} and
\cite[Chapter~1]{ABPositiveOperators}.  At the module level, write
\[
 \mathcal L_b^A(H):=
 \{R:H\to H:\ R\text{ is $A$-linear and order bounded}\}.
\]
The Riesz--Kantorovich theorem of Chamberlain and Wortel applies to the
present setting: $H$ is an $A$-vector lattice, hence is directed and has
the Riesz decomposition property; it is Dedekind complete by
Lemma~\ref{lem:L2-Dedekind-complete}; and its $A$-valued norm places it in
the normed-module class for which the required sequential
$\mathbb P$-Archimedean condition holds.  Therefore
$\mathcal L_b^A(H)$ is a Dedekind-complete vector lattice, its lattice
operations are the Riesz--Kantorovich operations, and increasing suprema are
computed pointwise on $H_+$; see
\cite[Theorem~5.1]{ChamberlainWortel}.  In particular, lattice operations
of order-bounded $A$-module homomorphisms remain $A$-module
homomorphisms.  For later estimates we record a direct proof of the
module-linearity of the modulus.

\begin{lemma}
\label{lem:RK-module-linearity}
Let $R:H\to H$ be $A$-linear and order bounded as a real-linear operator.
Then $|R|_{\rm ord}$, $R^+$ and $R^-$ are $A$-linear.
\end{lemma}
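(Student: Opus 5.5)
The plan is to prove $A$-linearity of $|R|_{\rm ord}$ first, and then get $R^\pm$ from the identities $R^+=\tfrac12(|R|_{\rm ord}+R)$ and $R^-=\tfrac12(|R|_{\rm ord}-R)$. Additivity of $|R|_{\rm ord}$ and real homogeneity are part of the classical Riesz--Kantorovich theorem, since $|R|_{\rm ord}$ is the additive extension of the map \eqref{eq:RK-modulus} on $H_+$. The only point is $A$-homogeneity: $|R|_{\rm ord}(ax)=a\,|R|_{\rm ord}x$ for $a\in A$ and $x\in H$.

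I will first reduce to $a\ge0$ and $x\ge0$. Writing $a=a^+-a^-$ and using additivity handles the sign of $a$. Writing $x=x^+-x^-$ handles the sign of $x$, because $a x^\pm\in H_+$ when $a\ge0$; this uses that $H$ is an $f$-module over $A$, so that multiplication by a positive element of $A$ is a positive operator.

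Next I prove $|R|_{\rm ord}(ax)\ge a\,|R|_{\rm ord}x$ for $a\ge0$ and $x\in H_+$. For $|u|\le x$ we have $|au|=a|u|\le ax$, so $|R|_{\rm ord}(ax)\ge |R(au)|=|aRu|=a|Ru|$. Here the $f$-algebra identity $|ay|=|a||y|$ is used. Taking the supremum over $u$ requires multiplication by $a$ to preserve this supremum. Multiplication by a positive element of $A$ is an orthomorphism-type map on the Dedekind complete space $H$, hence order continuous, so $\sup_u a|Ru|=a\sup_u|Ru|$. This is the same order continuity already invoked in the proof of Proposition~\ref{prop:operator-norm-is-norm}.

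For the reverse inequality I use the inversion trick from Proposition~\ref{prop:operator-norm-is-norm}. Put $c=a+\varepsilon e$, which is invertible with $c^{-1}\ge0$. Applying the inequality just proved to $c^{-1}$ and the vector $cx$ gives $|R|_{\rm ord}x=|R|_{\rm ord}(c^{-1}cx)\ge c^{-1}|R|_{\rm ord}(cx)$. Multiplying by $c\ge0$ yields $|R|_{\rm ord}(cx)\le c\,|R|_{\rm ord}x$, so $|R|_{\rm ord}(cx)=c\,|R|_{\rm ord}x$. Additivity then gives $|R|_{\rm ord}(ax)+\varepsilon|R|_{\rm ord}x=a|R|_{\rm ord}x+\varepsilon|R|_{\rm ord}x$, and cancelling the common term shows $|R|_{\rm ord}(ax)=a|R|_{\rm ord}x$.

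The main obstacle I expect is justifying that multiplication by $a\in A_+$ on $H$ is order continuous and commutes with suprema. I would try to get this from the $f$-module (band-preserving) structure inherited from the universal completion $E^u$, where multiplication by a positive element is an orthomorphism and hence order continuous. If that is unavailable, I would prove it directly: the map $y\mapsto ay$ is a positive operator, and one can show it maps suprema to suprema by applying the $c^{-1}$ argument to upper bounds. Specifically, if $w\ge a|Ru|$ for all $u$, then $c^{-1}w\ge c^{-1}a|Ru|$, and letting $\varepsilon\downarrow0$ uses the Archimedean property.
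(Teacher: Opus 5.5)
Your proof is correct and essentially the paper's argument. You use the same reduction to $a\in A_+$, $x\in H_+$, the same lower bound via order continuity of multiplication by $a$, and the same bounded-inversion perturbation $c=a+\varepsilon e$. The only difference is the last step: you get $|R|_{\rm ord}(cx)\le c\,|R|_{\rm ord}x$ by applying the lower bound to $c^{-1}$ and then cancel $\varepsilon|R|_{\rm ord}x$ exactly. The paper instead gets $|R|_{\rm ord}(cx)=c\,|R|_{\rm ord}x$ from the order isomorphism $u\mapsto cu$ of $[-x,x]$ onto $[-cx,cx]$, and then lets $\varepsilon\downarrow0$ using $ax\le cx$ and the Archimedean property.

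The obstacle you flag can in fact be avoided. Since $u\mapsto cu$ is an order automorphism of $H$ (its inverse is multiplication by $c^{-1}\ge0$), it preserves suprema automatically. This gives $|R|_{\rm ord}(cx)=c\,|R|_{\rm ord}x$ with no appeal to order continuity of multiplication by a general $a\ge0$. Your cancellation step then finishes the proof, and the lower-bound step is no longer needed.
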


\begin{proof}
It is enough to prove $A$-homogeneity of $|R|_{\rm ord}$.  Fix
$a\in A_+$ and $x\in H_+$.  For every $u$ with $|u|\le x$ one has
$|au|\le ax$ and
\[
 |R(au)|=|aRu|=a|Ru|.
\]
Multiplication by $a\ge0$ is an order-continuous lattice homomorphism on
$H$, hence taking suprema in \eqref{eq:RK-modulus} yields
\begin{equation}
\label{eq:RK-module-lower}
 a|R|_{\rm ord}x\le |R|_{\rm ord}(ax).
\end{equation}

Now fix a real $\varepsilon>0$ and put $a_\varepsilon=a+\varepsilon e$.
The bounded inversion property gives an inverse
$a_\varepsilon^{-1}\in A_+$.  The map
$u\mapsto a_\varepsilon u$ is therefore an order isomorphism from
$[-x,x]$ onto $[-a_\varepsilon x,a_\varepsilon x]$.  By $A$-linearity of
$R$ and \eqref{eq:RK-modulus},
\[
 |R|_{\rm ord}(a_\varepsilon x)
 =a_\varepsilon |R|_{\rm ord}x.
\]
Since $ax\le a_\varepsilon x$ and $|R|_{\rm ord}$ is positive,
\[
 |R|_{\rm ord}(ax)
 \le (a+\varepsilon e)|R|_{\rm ord}x.
\]
Taking $\varepsilon=1/n$ and using the Archimedean property of $H$ gives
\[
 |R|_{\rm ord}(ax)\le a|R|_{\rm ord}x.
\]
Together with \eqref{eq:RK-module-lower}, this proves equality for
$a\ge0$ and $x\ge0$.  Real linearity of $|R|_{\rm ord}$ then extends the
identity first to arbitrary $x\in H$ and then, by writing
$a=a^+-a^-$, to arbitrary $a\in A$.

Finally,
\[
 R^+=\frac{|R|_{\rm ord}+R}{2},\qquad
 R^-=\frac{|R|_{\rm ord}-R}{2},
\]
so $R^+$ and $R^-$ are $A$-linear as well.
\end{proof}

The preceding lemma settles module linearity, but not by itself
$T$-strong boundedness of the Riesz--Kantorovich parts.  This distinction
is essential.  We therefore isolate the class on which the latter property
is automatic.

\begin{definition}
Let
\[
 \boxed{
 \BTr(H):=\BT(H)_+-\BT(H)_+,
 }
\]
where $\BT(H)_+$ denotes the order-positive operators in $\BT(H)$.  An
element of $\BTr(H)$ will be called \emph{$T$-regular}.
\end{definition}

Every $T$-regular operator is classically regular, hence order bounded, but
the converse is not assumed.

\begin{theorem}
\label{thm:T-regular-lattice}
The space $\BTr(H)$ is a vector lattice for the order inherited from
$\mathcal L_b^A(H)$ and a subalgebra of $\BT(H)$.  More precisely, if
\[
 S=P-N,\qquad P,N\in\BT(H)_+,
\]
then $|S|_{\rm ord}\in\BT(H)_+$ and
\begin{equation}
\label{eq:modulus-domination}
 0\le |S|_{\rm ord}\le P+N.
\end{equation}
Consequently $S^+,S^-\in\BT(H)_+$.

If $k_P,k_N\in A_+$ are admissible bounds for $P$ and $N$, respectively,
then
\begin{equation}
\label{eq:modulus-strong-bound}
 \norm{|S|_{\rm ord}x}
 \le (k_P+k_N)\norm{x},\qquad x\in H.
\end{equation}
\end{theorem}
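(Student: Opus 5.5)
The plan is to obtain everything from the domination estimate \eqref{eq:modulus-domination}, together with a pointwise inequality comparing $\norm{\cdot}$ with the order. We work in the Dedekind complete lattice $\mathcal L_b^A(H)$. There $S=P-N$ with $P,N\ge0$ gives $|S|_{\rm ord}\le |P|_{\rm ord}+|N|_{\rm ord}=P+N$. Equivalently, for $x\in H_+$ and $|u|\le x$ we have
\[
|Su|\le P|u|+N|u|\le (P+N)x,
\]
and we take suprema in \eqref{eq:RK-modulus}. By Lemma~\ref{lem:RK-module-linearity}, $|S|_{\rm ord}$, $S^+$ and $S^-$ are $A$-linear. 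Positivity is immediate.

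The key auxiliary fact is monotonicity of the conditional norm: if $0\le v\le w$ in $H$ then $\norm{v}\le\norm{w}$. This holds because $v^2\le w^2$ in the $f$-algebra, $T$ is positive, and the square root on $A_+$ is monotone. For arbitrary $x\in H$ we have $\bigl||S|_{\rm ord}x\bigr|\le |S|_{\rm ord}|x|\le (P+N)|x|$. Also $\norm{|y|}=\norm{y}$, since $|y|^2=y^2$. Hence
\[
\norm{|S|_{\rm ord}x}\le \norm{(P+N)|x|}\le \norm{P|x|}+\norm{N|x|}\le (k_P+k_N)\norm{|x|}=(k_P+k_N)\norm{x}.
\]
This proves \eqref{eq:modulus-strong-bound} and gives $|S|_{\rm ord}\in\BT(H)_+$. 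The same domination $0\le S^\pm\le|S|_{\rm ord}$ gives $S^\pm\in\BT(H)_+$.

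For the vector lattice claim, $\BTr(H)$ is a real (indeed $A_+$-stable) linear subspace of $\mathcal L_b^A(H)$. It is closed under the modulus just shown, so it is a vector sublattice. Thus it is a vector lattice with the inherited order, and its lattice operations agree with the Riesz--Kantorovich ones. For the subalgebra claim, compute
\[
(P_1-N_1)(P_2-N_2)=(P_1P_2+N_1N_2)-(P_1N_2+N_1P_2).
\]
Products and sums of order-positive $T$-strongly bounded operators remain in $\BT(H)_+$, by the closure computations in the section on $T$-strongly bounded operators. The identity lies in $\BT(H)_+$.

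The main obstacle I expect is the monotonicity of $\norm{\cdot}$, which rests on monotonicity of square roots in $A_+$. If $0\le b\le c$ with $b=\beta^2$ and $c=\gamma^2$, where $\beta,\gamma\ge0$, then $(\gamma-\beta)(\gamma+\beta)=c-b\ge0$. In an Archimedean $f$-algebra this yields $\beta\le\gamma$: on the band where $\gamma+\beta$ acts, $\gamma-\beta$ must have vanishing negative part, since $(\gamma-\beta)^-(\gamma+\beta)\le0$ forces $(\gamma-\beta)^-\perp(\gamma+\beta)$, and hence $(\gamma-\beta)^-=0$ because $(\gamma-\beta)^-\le\beta\le\gamma+\beta$. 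I would state this small $f$-algebra fact explicitly. The identity $\norm{ax}=|a|\norm{x}$ is already used in the paper, so it can be taken as given.
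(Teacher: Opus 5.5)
Your proposal is correct and follows essentially the same route as the paper. Both arguments use the domination $0\le |S|_{\rm ord}\le P+N$, apply monotonicity of the conditional norm to $\bigl||S|_{\rm ord}x\bigr|\le (P+N)|x|$, obtain the sublattice claim from closure under the modulus, and obtain the subalgebra claim from the four-term product expansion.

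The differences are minor. You get $S^\pm\in\BT(H)_+$ from $0\le S^\pm\le |S|_{\rm ord}$ rather than from $S^\pm=\tfrac12(|S|_{\rm ord}\pm S)$. You also explicitly check that square roots are monotone in $A_+$, using semiprimeness of the unital Archimedean $f$-algebra $A$; the paper leaves this step implicit in ``by positivity of $T$''.
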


\begin{proof}
Because $P,N$ are positive,
\[
 -(P+N)\le P-N\le P+N.
\]
In the vector lattice $\mathcal L_b^A(H)$ this is equivalent to
\eqref{eq:modulus-domination}.  Lemma~\ref{lem:RK-module-linearity} shows
that $|S|_{\rm ord}$ is $A$-linear.

Put $U=P+N$.  For arbitrary $x\in H$, positivity of
$|S|_{\rm ord}$ and \eqref{eq:modulus-domination} give
\[
 \bigl||S|_{\rm ord}x\bigr|
 \le |S|_{\rm ord}|x|
 \le U|x|.
\]
The conditional norm is monotone: if $|u|\le v$ with $v\ge0$, then
$u^2\le v^2$, hence
$\norm{u}\le\norm{v}$ by positivity of $T$.  Therefore
\[
 \norm{|S|_{\rm ord}x}
 \le \norm{U|x|}
 \le (k_P+k_N)\norm{|x|}
 =(k_P+k_N)\norm{x}.
\]
Thus $|S|_{\rm ord}\in\BT(H)_+$.  The formulas
\[
 S^+=\frac{|S|_{\rm ord}+S}{2},\qquad
 S^-=\frac{|S|_{\rm ord}-S}{2}
\]
then show that $S^+,S^-\in\BT(H)_+$.  Hence $\BTr(H)$ is closed under
modulus and therefore under all lattice operations.  Finally, if
$S=P-N$ and $R=Q-M$ with $P,N,Q,M\in\BT(H)_+$, then
\[
 SR=(PQ+NM)-(PM+NQ),
\]
and all four compositions are order positive and $T$-strongly bounded.
Thus $SR\in\BTr(H)$.
\end{proof}

\subsection{Intrinsic order structure of \texorpdfstring{$\BTr(H)$}{BTr(H)}}

The class $\BTr(H)$ is not merely a device for ensuring that the order
modulus lies in the domain of the adjoint.  It is a natural solid operator
lattice inside the Dedekind-complete module-operator lattice
$\mathcal L_b^A(H)$.

\begin{proposition}
\label{prop:BTr-order-ideal}
The positive cone of $\BTr(H)$ is exactly the cone of order-positive
$T$-strongly bounded operators:
\begin{equation}
\label{eq:BTr-positive-cone}
 \boxed{\bigl(\BTr(H)\bigr)_+=\BT(H)_+.}
\end{equation}
Moreover, $\BTr(H)$ is an order ideal of $\mathcal L_b^A(H)$.  Equivalently,
if $R\in\mathcal L_b^A(H)$ and $S\in\BTr(H)$ satisfy
\[
 |R|_{\rm ord}\le |S|_{\rm ord},
\]
then $R\in\BTr(H)$.
\end{proposition}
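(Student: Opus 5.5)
Two facts are needed: the identification \eqref{eq:BTr-positive-cone} of the positive cone, and the solidity of $\BTr(H)$ inside $\mathcal L_b^A(H)$. We work throughout in the Dedekind-complete lattice $\mathcal L_b^A(H)$. By Theorem~\ref{thm:T-regular-lattice}, $\BTr(H)$ is a sublattice of it, so the order on $\BTr(H)$ is the pointwise order on $H_+$ inherited from $\mathcal L_b^A(H)$.

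For the positive cone, the inclusion $\BT(H)_+\subseteq(\BTr(H))_+$ is immediate: an order-positive $T$-strongly bounded $P$ can be written $P=P-0$, so it lies in $\BTr(H)$, and it is positive in the inherited order. Conversely, take $S\in\BTr(H)$ with $S\ge0$ in $\mathcal L_b^A(H)$. Then $Sx\ge0$ for $x\ge0$, so $S$ is order positive. Writing $S=P-N$ with $P,N\in\BT(H)_+$ shows $S\in\BT(H)$, since $\BT(H)$ is a vector space. Hence $S\in\BT(H)_+$. (One can also note that $S=|S|_{\rm ord}$ and use \eqref{eq:modulus-strong-bound}.)

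For the ideal property, let $R\in\mathcal L_b^A(H)$ and $S\in\BTr(H)$ with $|R|_{\rm ord}\le|S|_{\rm ord}$. Set $U:=|S|_{\rm ord}$. By Theorem~\ref{thm:T-regular-lattice}, $U\in\BT(H)_+$ with some admissible bound $k\in A_+$. The key step is to show $|R|_{\rm ord}\in\BT(H)_+$, and this mirrors the proof of Theorem~\ref{thm:T-regular-lattice}:
\begin{itemize}
\item $|R|_{\rm ord}$ is $A$-linear by Lemma~\ref{lem:RK-module-linearity}, and it is order positive.
\item For $x\in H$ we have $\bigl||R|_{\rm ord}x\bigr|\le|R|_{\rm ord}|x|\le U|x|$.
\item Monotonicity of the conditional norm then gives $\norm{|R|_{\rm ord}x}\le\norm{U|x|}\le k\norm{|x|}=k\norm{x}$.
\end{itemize}
The same argument also yields $|R|_{\rm ord}x\in H$, but that is automatic because $R$ maps $H$ into $H$. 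Next, $R^+=\tfrac12(|R|_{\rm ord}+R)$ and $R^-=\tfrac12(|R|_{\rm ord}-R)$ satisfy $0\le R^\pm\le|R|_{\rm ord}$ and are $A$-linear by Lemma~\ref{lem:RK-module-linearity}. The pointwise domination $|R^\pm x|\le|R|_{\rm ord}|x|$ gives $T$-strong boundedness of $R^\pm$ with the same bound $k$. Therefore $R=R^+-R^-$ with $R^\pm\in\BT(H)_+$, so $R\in\BTr(H)$.

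Finally, the equivalence of the two formulations of the ideal property is standard. Solidity means that $|R|\le|S|$ with $S$ in the subspace forces $R$ in the subspace. Since $\BTr(H)$ is a linear subspace and a sublattice, this is precisely the definition of an order ideal in $\mathcal L_b^A(H)$.

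The main point requiring care is the claim that pointwise domination on $H_+$ by an operator in $\BT(H)_+$ transfers $T$-strong boundedness. This rests on the inequality $\bigl|Vx\bigr|\le V|x|$ for positive $V$ and on the monotonicity of $\norm{\cdot}$ under $|u|\le v$, both of which were already used in Theorem~\ref{thm:T-regular-lattice}. The other thing to check is that the order in $\mathcal L_b^A(H)$ is indeed the pointwise order on $H_+$, which is part of the Chamberlain--Wortel Riesz--Kantorovich description.
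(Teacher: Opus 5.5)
Your proposal is correct and follows the paper's proof essentially step for step. The positive cone is identified via $\BTr(H)\subseteq\BT(H)$, and the ideal property comes from the domination $\bigl|Vx\bigr|\le V|x|\le U|x|$ together with monotonicity of $\norm{\cdot}$, applied first to $|R|_{\rm ord}$ (with $U=|S|_{\rm ord}\in\BT(H)_+$ from Theorem~\ref{thm:T-regular-lattice}) and then to $R^\pm$; the paper isolates the positive case $0\le R\le S$ and invokes it, whereas you inline that argument, and your side remark that $|R|_{\rm ord}x\in H$ is ``automatic because $R$ maps $H$ into $H$'' really rests on the Dedekind completeness of $H$ rather than on that fact alone.
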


\begin{proof}
The inclusion $\BT(H)_+\subseteq(\BTr(H))_+$ follows immediately from the
definition of $\BTr(H)$.  Conversely, every positive element of $\BTr(H)$
belongs to $\BT(H)$ because $\BTr(H)\subseteq\BT(H)$; hence
\eqref{eq:BTr-positive-cone} holds.

We first prove the ideal property in its positive form.  Let
\[
 0\le R\le S,
 \qquad R\in\mathcal L_b^A(H),\quad S\in\BTr(H)_+=\BT(H)_+.
\]
Let $k\in A_+$ be an admissible $T$-strong bound for $S$.  Since $R$ is
positive, for every $x\in H$,
\[
 |Rx|\le R|x|\le S|x|.
\]
By monotonicity of the conditional norm,
\[
 \norm{Rx}
 \le \norm{S|x|}
 \le k\norm{|x|}
 =k\norm{x}.
\]
Thus $R\in\BT(H)_+$, and therefore $R\in\BTr(H)$.

Now suppose $|R|_{\rm ord}\le |S|_{\rm ord}$ with $S\in\BTr(H)$.  By
Theorem~\ref{thm:T-regular-lattice}, $|S|_{\rm ord}\in\BT(H)_+$.  The
positive case just proved gives $|R|_{\rm ord}\in\BT(H)_+$.  Since
$R^+,R^-\le |R|_{\rm ord}$ in $\mathcal L_b^A(H)$, the same domination
argument yields $R^+,R^-\in\BT(H)_+$, and hence
$R=R^+-R^-\in\BTr(H)$.
\end{proof}

\begin{theorem}
\label{thm:BTr-Dedekind-complete}
The vector lattice $\BTr(H)$ is Dedekind complete.  More precisely, if
$(S_\alpha)$ is an increasing net in $\BTr(H)$ which is bounded above in
$\BTr(H)$ and
\[
 S=\sup_\alpha S_\alpha,
\]
then for every $x\in H_+$,
\begin{equation}
\label{eq:BTr-pointwise-sup}
 \boxed{Sx=\sup_\alpha S_\alpha x.}
\end{equation}
The analogous statement holds for decreasing nets and infima.  For an
arbitrary nonempty family bounded above, its supremum is obtained from the
directed net of its finite suprema.
\end{theorem}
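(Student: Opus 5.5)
The plan is to reduce everything to two facts already in place: $\mathcal L_b^A(H)$ is Dedekind complete with increasing suprema computed pointwise on $H_+$ (Chamberlain--Wortel, \cite[Theorem~5.1]{ChamberlainWortel}), and $\BTr(H)$ is an order ideal of $\mathcal L_b^A(H)$ (Proposition~\ref{prop:BTr-order-ideal}).

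\emph{Step 1: reduce to a positive increasing net.} Let $(S_\alpha)$ be increasing in $\BTr(H)$ and bounded above in $\BTr(H)$ by some $B$. Fix an index $\alpha_0$ and consider the tail $\alpha\ge\alpha_0$. The operators $S_\alpha-S_{\alpha_0}$ are positive and dominated by $B-S_{\alpha_0}\in\BTr(H)$. Since suprema and pointwise formulas commute with adding the constant $S_{\alpha_0}$ on both sides, I may therefore assume $0\le S_\alpha\le B$.

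\emph{Step 2: take the supremum in the big lattice and pull it back.} Regard the net in $\mathcal L_b^A(H)$. It is bounded above there by $B$, so Dedekind completeness gives $S:=\sup_\alpha S_\alpha$ in $\mathcal L_b^A(H)$, with $Sx=\sup_\alpha S_\alpha x$ for every $x\in H_+$. This supremum exists in $H$ because $H$ is Dedekind complete (Lemma~\ref{lem:L2-Dedekind-complete}) and $S_\alpha x\le Bx$. Moreover $0\le S\le B$, and $B\in\BTr(H)$, so the order-ideal property puts $S$ in $\BTr(H)$. Explicitly, the domination argument gives $\norm{Sx}\le k_B\norm{x}$ for any admissible bound $k_B$ of $B$.

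\emph{Step 3: check that this is the supremum in $\BTr(H)$.} $S$ is an upper bound in $\BTr(H)$. Any upper bound in $\BTr(H)$ is also an upper bound in $\mathcal L_b^A(H)$, hence dominates $S$. So $S$ is the supremum in $\BTr(H)$, and the pointwise formula \eqref{eq:BTr-pointwise-sup} is inherited. The decreasing case follows by applying this to $(-S_\alpha)$. For an arbitrary nonempty family bounded above, the finite suprema lie in $\BTr(H)$ by Theorem~\ref{thm:T-regular-lattice}, form an increasing net with the same upper bounds, and hence have the same supremum. This gives Dedekind completeness.

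The main obstacle is Step~2: one must make sure the supremum taken in $\mathcal L_b^A(H)$ is \emph{$T$-strongly bounded}, i.e.\ genuinely in $\BT(H)$. A priori, pointwise suprema of $T$-strongly bounded operators need not be. This is exactly where the order bound must itself lie in $\BTr(H)$, so that the monotonicity of the conditional norm, $|Sx|\le S|x|\le B|x|$, transfers the bound $k_B$. The $A$-linearity of $S$ comes for free from Chamberlain--Wortel, or alternatively from order continuity of multiplication by $a\in A_+$ applied to the pointwise formula.
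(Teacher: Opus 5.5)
Your proof is correct and follows essentially the same route as the paper: take the supremum in the Dedekind-complete lattice $\mathcal L_b^A(H)$, with monotone suprema computed pointwise on $H_+$ by Chamberlain--Wortel, and use the order-ideal property of $\BTr(H)$ after translating by a fixed member to show that this ambient supremum lies in $\BTr(H)$ and is the supremum there. The only difference is organizational: the paper applies the translation $0\le V-S_0\le U-S_0$ directly to an arbitrary bounded family, whereas you apply it to increasing nets first and then pass to general families through the net of finite suprema.
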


\begin{proof}
By \cite[Theorem~5.1]{ChamberlainWortel},
$\mathcal L_b^A(H)$ is a Dedekind-complete vector lattice.  Let
$\varnothing\ne\mathcal S\subseteq\BTr(H)$ be bounded above in $\BTr(H)$,
and choose an upper bound $U\in\BTr(H)$.  Its supremum
\[
 V:=\sup_{\mathcal L_b^A(H)}\mathcal S
\]
exists in $\mathcal L_b^A(H)$.  Fix $S_0\in\mathcal S$.  Then
\[
 0\le V-S_0\le U-S_0.
\]
Since $U-S_0\in\BTr(H)$ and $\BTr(H)$ is an order ideal by
Proposition~\ref{prop:BTr-order-ideal}, it follows that
$V-S_0\in\BTr(H)$, hence $V\in\BTr(H)$.  Thus the ambient supremum belongs
to $\BTr(H)$ and is also the supremum there.  This proves Dedekind
completeness.

If $(S_\alpha)$ is increasing, \cite[Theorem~5.1]{ChamberlainWortel}
also states that monotone suprema in $\mathcal L_b^A(H)$ are computed pointwise
on the positive cone.  Since the supremum in $\BTr(H)$ coincides with the
ambient one, \eqref{eq:BTr-pointwise-sup} follows.  The assertion for
infima follows by applying this result to $(-S_\alpha)$, and the final
statement follows from the standard directed net of finite suprema.
\end{proof}

\begin{proposition}
\label{prop:operator-norm-monotone}
If $S,R\in\BT(H)_+$ and $0\le S\le R$, then
\begin{equation}
\label{eq:operator-norm-monotone}
 \boxed{\opnorm{S}\le\opnorm{R}.}
\end{equation}
\end{proposition}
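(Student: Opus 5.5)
The plan is to show that every admissible bound for $R$ is also an admissible bound for $S$. Then the defining infimum for $\opnorm{S}$ is taken over a larger set than the one for $\opnorm{R}$, and so it is smaller. Concretely, I aim to prove $\mathcal M(R)\subseteq\mathcal M(S)$. Since $\opnorm{R}\in\mathcal M(R)$ by Proposition~\ref{prop:operator-norm-is-norm}, it would even suffice to show $\opnorm{R}\in\mathcal M(S)$, which gives $\opnorm{S}\le\opnorm{R}$ at once.

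Fix $k\in\mathcal M(R)$ and $x\in H$. Since $S$ is order positive, $|Sx|\le S|x|$. From $0\le S\le R$ in the operator order, which is the Riesz--Kantorovich order of $\mathcal L_b^A(H)$ and agrees with the pointwise order on $H_+$, we get $S|x|\le R|x|$. Thus $|Sx|\le R|x|$ with $R|x|\ge0$. The monotonicity of the conditional norm, already used in the proof of Theorem~\ref{thm:T-regular-lattice}, then applies: $(Sx)^2\le (R|x|)^2$ in the $f$-algebra, and $T$ is positive. This yields
\[
 \norm{Sx}\le\norm{R|x|}\le k\norm{|x|}=k\norm{x},
\]
where the last equality holds because $|x|^2=x^2$.

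Hence $k\in\mathcal M(S)$, and taking the infimum over $k\in\mathcal M(R)$ in the Dedekind complete space $A$ gives the stated inequality. The argument is essentially the positive case of Proposition~\ref{prop:BTr-order-ideal}, refined to keep track of the bound. The only step that needs care is the identification of the order $S\le R$ with the pointwise inequality $Sx\le Rx$ for $x\in H_+$. This holds by definition of the positive cone in $\mathcal L_b^A(H)$, so I do not expect a genuine obstacle.
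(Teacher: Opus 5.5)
Your proposal is correct and follows essentially the same route as the paper's proof. Both arguments show $\mathcal M(R)\subseteq\mathcal M(S)$ via $|Sx|\le S|x|\le R|x|$ and the monotonicity of the conditional norm, and then pass to infima; your extra observations (that $\opnorm{R}\in\mathcal M(S)$ already suffices, and that $\norm{|x|}=\norm{x}$ because $|x|^2=x^2$) only make explicit what the paper's argument uses implicitly.
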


\begin{proof}
Let $k\in A_+$ be any admissible $T$-strong bound for $R$.  For every
$x\in H$,
\[
 |Sx|\le S|x|\le R|x|,
\]
and therefore
\[
 \norm{Sx}\le\norm{R|x|}\le k\norm{x}.
\]
Thus every admissible bound for $R$ is also admissible for $S$.  Taking the
infimum of admissible bounds gives \eqref{eq:operator-norm-monotone}.
\end{proof}

\begin{remark}
The monotonicity in Proposition~\ref{prop:operator-norm-monotone} does not
make $\opnorm{\cdot}$ a lattice norm on $\BTr(H)$ in the stronger sense
$\opnorm{|S|_{\rm ord}}=\opnorm{S}$.  The scalar two-dimensional example
in Section~\ref{sec:comparison-moduli} gives an explicit failure of this
identity; see Example~\ref{ex:two-moduli}.
\end{remark}

\begin{remark}
\label{rem:not-all-order-bounded}
We do \emph{not} claim that
\[
 \BTr(H)=\BT(H)\cap L_b(H,H).
\]
Such an equality would require an automatic $T$-strong boundedness theorem
for the positive and negative Riesz--Kantorovich parts of an arbitrary
order-bounded element of $\BT(H)$.  No such theorem is used here.  The
restriction to $\BTr(H)$ is precisely what prevents the circularity that
would arise from applying the adjoint to $S^+$, $S^-$ or
$|S|_{\rm ord}$ before their membership in $\BT(H)$ has been established.
\end{remark}

\subsection{The order modulus and the adjoint}

\begin{theorem}
\label{thm:adjoint-modulus}
The adjoint maps $\BTr(H)$ onto itself and is an order isomorphism of the
vector lattice $\BTr(H)$.  In particular, for every $S\in\BTr(H)$,
\begin{equation}
\label{eq:adjoint-modulus}
 \boxed{\bigl(|S|_{\rm ord}\bigr)^*=|S^*|_{\rm ord}.}
\end{equation}
Moreover,
\begin{equation}
\label{eq:adjoint-positive-negative-parts}
 (S^+)^*=(S^*)^+,
 \qquad
 (S^-)^*=(S^*)^-.
\end{equation}
Since $\BTr(H)$ is Dedekind complete, the adjoint preserves every bounded
supremum and infimum: for every nonempty family $(S_\alpha)$ which is
bounded above or below, respectively,
\begin{equation}
\label{eq:adjoint-arbitrary-sup-inf}
 \left(\sup_\alpha S_\alpha\right)^*
 =\sup_\alpha S_\alpha^*,
 \qquad
 \left(\inf_\alpha S_\alpha\right)^*
 =\inf_\alpha S_\alpha^*.
\end{equation}
In particular, $S_\alpha\uparrow S$ implies
$S_\alpha^*\uparrow S^*$.
\end{theorem}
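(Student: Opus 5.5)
The plan is to show first that the adjoint is a bijective, positive, linear map of $\BTr(H)$ onto itself whose inverse is also positive. An order isomorphism between vector lattices is automatically a lattice isomorphism, so the lattice formulas will follow from this abstract fact.

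First, the adjoint maps $\BTr(H)$ into itself. Let $S=P-N$ with $P,N\in\BT(H)_+$. Proposition~\ref{prop:adjoint-properties} gives $S^*=P^*-N^*$. Theorem~\ref{thm:adjoint} puts $P^*$ and $N^*$ in $\BT(H)$, and Theorem~\ref{thm:order-positive-adjoint} makes them order positive, so $S^*\in\BTr(H)$. The map $S\mapsto S^*$ is real-linear, in fact $A$-linear. Because $(S^*)^*=S$, it is its own inverse, hence a bijection of $\BTr(H)$ onto itself.

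Positivity in both directions is exactly Theorem~\ref{thm:order-positive-adjoint}, combined with the identification $(\BTr(H))_+=\BT(H)_+$ from Proposition~\ref{prop:BTr-order-ideal}. So for $S,R\in\BTr(H)$ we have $S\le R$ if and only if $S^*\le R^*$.

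Next, I would verify the standard fact that a bijective linear order isomorphism $\Phi$ between vector lattices preserves suprema, and I would do this directly. Let $S\in\BTr(H)$. Since $\BTr(H)$ is a vector lattice by Theorem~\ref{thm:T-regular-lattice}, $|S|_{\rm ord}$ lies in $\BTr(H)$. From $\pm S\le|S|_{\rm ord}$ we get $\pm S^*\le(|S|_{\rm ord})^*$, hence $|S^*|_{\rm ord}\le(|S|_{\rm ord})^*$. Applying the same inequality to $S^*$ and using the involution gives $|S|_{\rm ord}\le(|S^*|_{\rm ord})^*$. Taking adjoints of this, which preserves order, yields $(|S|_{\rm ord})^*\le|S^*|_{\rm ord}$. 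These two inequalities prove \eqref{eq:adjoint-modulus}.

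The formulas \eqref{eq:adjoint-positive-negative-parts} then follow from $S^\pm=\tfrac12(|S|_{\rm ord}\pm S)$ and linearity of the adjoint. All these moduli must be computed in $\BTr(H)$ rather than in $\mathcal L_b^A(H)$. This is consistent because $\BTr(H)$ is an ideal, so its lattice operations are the ambient ones, and this also avoids applying the adjoint outside $\BT(H)$.

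For \eqref{eq:adjoint-arbitrary-sup-inf}, let $(S_\alpha)$ be bounded above in $\BTr(H)$. By Theorem~\ref{thm:BTr-Dedekind-complete}, $V=\sup_\alpha S_\alpha$ exists in $\BTr(H)$. Then $V^*$ is an upper bound of $(S_\alpha^*)$. If $W\in\BTr(H)$ is any upper bound of $(S_\alpha^*)$, then $W^*\ge S_\alpha$ for all $\alpha$, so $W^*\ge V$, and hence $W\ge V^*$. Therefore $V^*=\sup_\alpha S_\alpha^*$ in $\BTr(H)$. Infima follow by replacing $S_\alpha$ with $-S_\alpha$. The monotone convergence statement is the special case of an increasing net: $S_\alpha\uparrow S$ means $S=\sup S_\alpha$, and order preservation of the adjoint makes $(S_\alpha^*)$ increasing.

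I expect no serious obstacle. The only delicate point is the domain bookkeeping: every modulus or supremum the adjoint is applied to must first be shown to lie in $\BTr(H)\subseteq\BT(H)$, which Theorem~\ref{thm:T-regular-lattice} and Theorem~\ref{thm:BTr-Dedekind-complete} guarantee.
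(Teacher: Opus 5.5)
Your proof is correct and follows the paper's own argument. You show that the adjoint is a bijective linear map of $\BTr(H)$ onto itself that preserves order in both directions, via Theorem~\ref{thm:order-positive-adjoint} and the involution, and you then deduce the modulus, positive/negative-part and bounded sup/inf identities as the paper does. The only difference is minor: you verify $(|S|_{\rm ord})^*=|S^*|_{\rm ord}$ directly by the two-sided inequality argument and get $S^\pm$ from $\tfrac12(|S|_{\rm ord}\pm S)$, whereas the paper cites the general fact that order isomorphisms preserve lattice operations. Your version is slightly more self-contained.
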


\begin{proof}
If $S=P-N$ with $P,N\in\BT(H)_+$, then
\[
 S^*=P^*-N^*.
\]
By Theorem~\ref{thm:order-positive-adjoint}, $P^*,N^*\in\BT(H)_+$, so
$S^*\in\BTr(H)$.  Since the adjoint is involutive, it maps
$\BTr(H)$ onto itself.  The same theorem shows that
\[
 R\le Q\quad\Longleftrightarrow\quad R^*\le Q^*,
 \qquad R,Q\in\BTr(H),
\]
so $*$ is an order isomorphism.

Every order isomorphism of vector lattices preserves suprema, infima and
moduli.  Hence
\[
 \bigl(|S|_{\rm ord}\bigr)^*=|S^*|_{\rm ord}.
\]
Applying the same observation to $S\vee0$ and $(-S)\vee0$ gives the
identities for $S^+$ and $S^-$.

Finally, a bijective order isomorphism preserves every supremum and infimum
that exists: if $U=\sup_\alpha S_\alpha$, then $U^*$ is an upper bound of
$(S_\alpha^*)$, and applying the inverse order isomorphism $*$ to any other
upper bound shows that $U^*$ is the least one.  The infimum statement is
analogous.  Dedekind completeness from
Theorem~\ref{thm:BTr-Dedekind-complete} then yields
\eqref{eq:adjoint-arbitrary-sup-inf} and the monotone-convergence
statement.
\end{proof}

\begin{remark}
The argument above deliberately avoids defining
$\bigl(|S|_{\rm ord}\bigr)^*$ until
Theorem~\ref{thm:T-regular-lattice} has first shown that
$|S|_{\rm ord}\in\BT(H)$.  Thus no adjoint is applied to a merely
order-bounded operator outside its established domain.
\end{remark}

% ============================================================
\section{Positive polynomial calculus and square roots}
% ============================================================

We now develop, directly in the conditional $L^2$ setting, the small
piece of functional calculus needed to define $(S^*S)^{1/2}$.  The point
is that no universal completeness of $A=R(T)$ is required.  We use only
that $A$ is a Dedekind complete unital Archimedean $f$-algebra.  Hence it
is uniformly complete.  Multiplication by an element of $A$ is an
orthomorphism (and therefore order continuous), and the usual positive
square roots and bounded inverses are available in $A$; see
\cite[Theorems~3.4 and~3.9]{HuijsmansDePagterIdeal} and, for the
square-root result, also
\cite[Theorem~4.2 and Corollary~4.3]{BeukersHuijsmansDePagter}.  This
scalar square-root theory should be distinguished from the operator square
root proved below.

For a self-adjoint operator $P\in\BT(H)$ we write
\[
  P\ge_h0
  \quad\Longleftrightarrow\quad
  \inner{Px}{x}\ge0\quad(x\in H),
\]
and $P\le_h Q$ means $Q-P\ge_h0$.

\begin{lemma}
\label{lem:positive-form-CS}
Let $B:H\times H\to A$ be symmetric and $A$-bilinear, and suppose
$B(x,x)\ge0$ for every $x\in H$.  Then
\[
 |B(x,y)|^2\le B(x,x)B(y,y),\qquad x,y\in H.
\]
\end{lemma}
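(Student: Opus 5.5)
The strategy is the classical discriminant argument, carried out in the $f$-algebra $A$. The quadratic parameter will range over $A$ rather than over $\mathbb R$, and the inequality will be localized using the order structure of $A$.

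\textbf{Step 1 (the quadratic form).} Fix $x,y\in H$ and write $\alpha=B(x,x)\ge0$, $\beta=B(y,y)\ge0$, $\gamma=B(x,y)$. For every $t\in A$, $A$-bilinearity and symmetry give
\[
 0\le B(tx+y,tx+y)=t^2\alpha+2t\gamma+\beta .
\]
The special choice $t=-\gamma\beta'$ would finish at once if $\alpha$ were invertible, but $\alpha$ may have zeros. So I will regularize: for real $\varepsilon>0$ put $\alpha_\varepsilon=\alpha+\varepsilon e\ge\varepsilon e$. By the bounded inversion property, $\alpha_\varepsilon$ is invertible with $\alpha_\varepsilon^{-1}\ge0$.

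\textbf{Step 2 (the regularized estimate).} Take $t=-\gamma\alpha_\varepsilon^{-1}$. Since $t^2\ge0$, $\alpha\le\alpha_\varepsilon$, and multiplication by a positive element preserves order in an $f$-algebra,
\[
 0\le t^2\alpha+2t\gamma+\beta\le t^2\alpha_\varepsilon+2t\gamma+\beta
 =\gamma^2\alpha_\varepsilon^{-1}-2\gamma^2\alpha_\varepsilon^{-1}+\beta
 =\beta-\gamma^2\alpha_\varepsilon^{-1}.
\]
Multiplying by $\alpha_\varepsilon\ge0$ gives $\gamma^2\le\alpha_\varepsilon\beta=\alpha\beta+\varepsilon\beta$ for every $\varepsilon>0$. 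Since $A$ is Archimedean, letting $\varepsilon=1/n$ yields $\gamma^2\le\alpha\beta$. Finally, in an $f$-algebra $|\gamma|^2=\gamma^2$, which gives the stated inequality $|B(x,y)|^2\le B(x,x)B(y,y)$.

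\textbf{Main obstacle.} The main delicacy is justifying the $f$-algebra manipulations: commutativity of $A$, $t^2\ge0$, $|\gamma|^2=\gamma^2$, and monotonicity of multiplication by positive elements. Together with the $\varepsilon$-regularization, these replace the usual case split on whether $B(x,x)=0$. That case split is unavailable pointwise in $A$, since $\alpha$ can vanish on some bands and not others. All of these facts are standard for Archimedean unital $f$-algebras, and the bounded inversion property is already recorded in the preliminaries, so the argument should go through without appealing to universal completeness.
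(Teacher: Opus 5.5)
Your proof is correct and is essentially the paper's argument: the same $\varepsilon$-regularized discriminant trick using the bounded inversion property. The only differences are that you regularize $B(x,x)$ and expand $B(tx+y,tx+y)$ where the paper regularizes $B(y,y)$ and expands $B(x+\lambda y,x+\lambda y)$, and you pass to the limit via the Archimedean property ($\gamma^2\le\alpha\beta+\tfrac1n\beta$) rather than via order continuity of multiplication; also, the stray ``$t=-\gamma\beta'$'' in Step 1 should read $t=-\gamma\alpha^{-1}$.
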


\begin{proof}
Put $a=B(x,x)$, $b=B(x,y)$ and $c=B(y,y)$.  For every $\lambda\in A$,
\[
 0\le B(x+\lambda y,x+\lambda y)=a+2\lambda b+\lambda^2c.
\]
Fix $\varepsilon>0$ and put $c_\varepsilon=c+\varepsilon e$.  Since
$c_\varepsilon\ge\varepsilon e$, it is invertible in $A$ and
$c_\varepsilon^{-1}\ge0$.  Taking
$\lambda=-bc_\varepsilon^{-1}$ gives
\[
 0\le a-2b^2c_\varepsilon^{-1}
       +b^2c\,c_\varepsilon^{-2}
 \le a-b^2c_\varepsilon^{-1},
\]
because $0\le c\le c_\varepsilon$.  Hence
\[
 b^2\le a(c+\varepsilon e).
\]
As $\varepsilon\downarrow0$, order continuity of multiplication by
$a\ge0$ yields $a(c+\varepsilon e)\downarrow ac$.  Therefore
$b^2\le ac$.  Since $b^2=|b|^2$ in an $f$-algebra, the result follows.
\end{proof}

\begin{lemma}
\label{lem:positive-contraction-square}
If $P=P^*$ and $0\le_hP\le_h I$, then
\[
 0\le_hP^2\le_hP.
\]
\end{lemma}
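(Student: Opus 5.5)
The plan is to reduce both inequalities to evaluations of the form $\inner{Px}{x}$ at suitably chosen vectors, using only self-adjointness of $P$, the hypotheses $P\ge_h0$ and $I-P\ge_h0$, and the symmetry of the real pairing. No square root of $P$ is available yet, since the square-root construction comes later in this section. The argument must therefore avoid writing $P^2=P^{1/2}PP^{1/2}$ and instead work directly with $P$ and $I-P$.

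\emph{Step 1: $P^2\ge_h0$.} Since $P=P^*$, Proposition~\ref{prop:adjoint-properties}(iii) gives $(P^2)^*=P^2$, so $P^2$ is self-adjoint. For $x\in H$,
\[
 \inner{P^2x}{x}=\inner{Px}{P^*x}=\inner{Px}{Px}=\norm{Px}^2\ge0 .
\]
This is simply Proposition~\ref{prop:SstarS-positive} applied to $S=P$.

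\emph{Step 2: $P^2\le_hP$.} We need $\inner{(P-P^2)x}{x}\ge0$ for every $x$. Write $P-P^2=P(I-P)$. Since $P$ and $I-P$ commute and are both self-adjoint, $P(I-P)$ is self-adjoint. Testing the hypothesis $I-P\ge_h0$ at the vector $Px$ gives
\[
 \inner{(I-P)Px}{Px}\ge0 .
\]
Moving one factor $P$ across the pairing by self-adjointness, this equals $\inner{P(I-P)Px}{x}=\inner{(P^2-P^3)x}{x}$. This yields $P^3\le_hP^2$ rather than $P^2\le_hP$, so the naive route is off by one power.

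The main obstacle is exactly this parity issue. I would handle it with the $A$-valued Cauchy--Schwarz inequality, Lemma~\ref{lem:positive-form-CS}, applied to the positive symmetric $A$-bilinear form $B(x,y)=\inner{Px}{y}$. Its symmetry comes from $P=P^*$, its positivity from $P\ge_h0$, and its $A$-bilinearity from $A$-linearity of $P$ together with the averaging property \eqref{eq:averaging}. With $y=Px$ the lemma gives
\[
 \norm{Px}^4=\inner{Px}{Px}^2=B(x,Px)^2\le B(x,x)\,B(Px,Px)=\inner{Px}{x}\,\inner{P^2x}{Px}.
\]
Step 2 (via $I-P\ge_h0$ at $Px$) gives $\inner{P^2x}{Px}\le\inner{Px}{Px}=\norm{Px}^2$. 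Put $s=\norm{Px}^2$ and $t=\inner{Px}{x}$; these are positive elements of $A$ with $s^2\le ts$.

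It remains to cancel one factor $s$ in the $f$-algebra $A$, and I expect this to be the delicate point. I would use the same $\varepsilon$-device as in Lemma~\ref{lem:positive-form-CS}. From $s^2\le ts$ we get $s(s-t)\le0$, and since $s\ge0$ in an $f$-algebra, multiplying $(s-t)^+$ by $s$ gives $s(s-t)^+=0$. On the band where $s$ vanishes, $(s-t)^+\le s=0$. Hence $(s-t)^+=0$, that is $s\le t$. In an Archimedean $f$-algebra with unit, $s(s-t)^+=0$ forces $(s-t)^+\perp s$, and $0\le(s-t)^+\le s$ then gives $(s-t)^+=0$. Thus $\norm{Px}^2\le\inner{Px}{x}$, which by Step 1 is exactly $\inner{P^2x}{x}\le\inner{Px}{x}$, i.e.\ $P^2\le_hP$.
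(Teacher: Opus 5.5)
Your proof is correct and follows the paper's argument: apply Lemma~\ref{lem:positive-form-CS} to $B(x,y)=\inner{Px}{y}$ with $y=Px$, use $P\le_h I$ at the vector $Px$ to reach $s^2\le ts$, and cancel $s$ in the $f$-algebra $A$. The only cosmetic difference is the cancellation step, with $d=(s-t)^+$. The paper uses $0\le d\le s$ to get $d^2\le sd=0$ and then invokes semiprimeness directly, whereas you go through the equivalent fact that $sd=0$ forces $s\perp d$; the announced $\varepsilon$-device and the band remark are not needed.
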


\begin{proof}
The form
\[
 B_P(x,y):=\inner{Px}{y}
\]
is symmetric, $A$-bilinear and positive.  Lemma~\ref{lem:positive-form-CS}
therefore gives
\[
 |\inner{Px}{y}|^2
 \le \inner{Px}{x}\inner{Py}{y}.
\]
Taking $y=Px$ and using $P\le_hI$ at the vector $Px$ gives
\[
 \inner{Px}{Px}^{\,2}
 \le \inner{Px}{x}\inner{P(Px)}{Px}
 \le \inner{Px}{x}\inner{Px}{Px}.
\]
Set $a=\inner{Px}{Px}$ and $b=\inner{Px}{x}$.  Thus $a,b\in A_+$ and
$a^2\le ab$.  We claim that $a\le b$.  Indeed, with
$d=(a-b)^+$, multiplication by $a\ge0$ is a lattice homomorphism, so
\[
 ad=a(a-b)^+=(a(a-b))^+=0.
\]
Since $0\le d\le a$, we have $0\le d^2\le ad=0$.  An Archimedean
$f$-algebra is semiprime, hence $d=0$.  Thus $a\le b$.
Consequently
\[
 \inner{P^2x}{x}=\inner{Px}{Px}\le\inner{Px}{x},
\]
which proves $P^2\le_hP$.  Positivity of $P^2=P^*P$ is immediate.
\end{proof}

\begin{lemma}
\label{lem:polynomial-sandwich}
If $Q=Q^*\ge_h0$ and $R=R^*$ commutes with $Q$, then
\[
 QR^2\ge_h0.
\]
\end{lemma}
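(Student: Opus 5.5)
The plan is to reduce inner-product positivity of $QR^2$ to that of $Q$ by moving one factor $R$ across the pairing. Two preliminary facts are needed. First, $QR^2$ is self-adjoint. Indeed, by Proposition~\ref{prop:adjoint-properties},
\[
 (QR^2)^*=(R^*)^2Q^*=R^2Q,
\]
and since $R$ commutes with $Q$ we get $R^2Q=RQR=QR^2$. Second, $R$ commutes with $Q$ implies $QR=RQ$, so $RQR=QR^2$ as operators on $H$.

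Now fix $x\in H$ and put $y=Rx\in H$. Using the adjoint identity \eqref{eq:adjoint}, self-adjointness $R^*=R$, and the commutation $QR^2=RQR$, I compute
\[
 \inner{QR^2x}{x}=\inner{R(QRx)}{x}=\inner{QRx}{R^*x}=\inner{Qy}{y}.
\]
By the hypothesis $Q\ge_h0$ applied at the vector $y$, this is $\ge0$. Since $x$ was arbitrary, $\inner{QR^2x}{x}\ge0$ for all $x\in H$, which together with self-adjointness gives $QR^2\ge_h0$ in the sense fixed at the start of this section.

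There is no real obstacle. The argument deliberately avoids needing a square root of $Q$, which is not yet available at this stage of the paper. The only point to check is that the commutation hypothesis is used in the right order: I write $QR^2=QRR=RQR$, so that exactly one $R$ sits on each side of $Q$ once the adjoint identity has moved the outer $R$ into the second slot.
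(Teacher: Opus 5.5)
Your proof is correct and is essentially the paper's argument: the paper also writes $QR^2=RQR$ and computes $\langle QR^2x,x\rangle_T=\langle QRx,Rx\rangle_T\ge0$. Your extra check that $QR^2$ is self-adjoint is a sound supplement, since that section defines $\ge_h$ for self-adjoint operators.
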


\begin{proof}
Since $QR^2=RQR$, for every $x\in H$,
\[
 \inner{QR^2x}{x}=\inner{QRx}{Rx}\ge0.
\]
\end{proof}

\begin{theorem}
\label{thm:positive-polynomial-calculus}
Let $P=P^*$ satisfy $0\le_hP\le_hI$.  If $p\in\mathbb R[t]$ and
$p(t)\ge0$ for every $t\in[0,1]$, then
\[
 \boxed{p(P)\ge_h0.}
\]
Consequently, if $p,q\in\mathbb R[t]$ and $p(t)\le q(t)$ on $[0,1]$,
then $p(P)\le_hq(P)$.
\end{theorem}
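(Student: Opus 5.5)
The plan is to use the classical Lukács representation of polynomials nonnegative on $[0,1]$. Every $p\in\mathbb R[t]$ with $p\ge0$ on $[0,1]$ can be written as
\[
 p(t)=\sigma_0(t)+t\,\sigma_1(t)+(1-t)\,\sigma_2(t)+t(1-t)\,\sigma_3(t),
\]
where each $\sigma_j$ is a finite sum of squares $r(t)^2$ of real polynomials. In fact one of the two forms $f^2+t(1-t)g^2$ or $t f^2+(1-t)g^2$ suffices, depending on the parity of the degree. I will quote this as a standard fact rather than prove it. Since $p\mapsto p(P)$ is a real-algebra homomorphism into $\BT(H)$, and sums of $\ge_h$-positive operators are $\ge_h$-positive, it suffices to treat each term $q(P)r(P)^2$ with $q\in\{1,t,1-t,t(1-t)\}$ and $r\in\mathbb R[t]$.

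Next I verify the hypotheses of Lemma~\ref{lem:polynomial-sandwich}. For a real polynomial $r$, the operator $R=r(P)$ is self-adjoint. This follows from Proposition~\ref{prop:adjoint-properties}: $(P^n)^*=(P^*)^n=P^n$, real scalars are elements of $A$ (multiples of $e$), and the adjoint is additive. Moreover $R$ commutes with every polynomial in $P$. So it remains to check that $Q=q(P)$ is self-adjoint and $\ge_h0$ for each of the four choices of $q$:
\begin{itemize}
\item $Q=I$ is immediate.
\item $Q=P$ holds by hypothesis.
\item $Q=I-P$ is exactly $P\le_h I$.
\item $Q=P-P^2=P(I-P)$ is the content of Lemma~\ref{lem:positive-contraction-square}, namely $P^2\le_hP$.
\end{itemize}
Lemma~\ref{lem:polynomial-sandwich} then gives $q(P)r(P)^2\ge_h0$ in each case. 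Summing the terms yields $p(P)\ge_h0$.

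The consequence follows by applying the first part to $q-p\ge0$ on $[0,1]$. Then $(q-p)(P)=q(P)-p(P)\ge_h0$, which means $p(P)\le_h q(P)$.

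The main obstacle is the scalar representation theorem. If I were not allowed to cite Lukács/Markov–Lukács, I would prove a weaker factorization by induction on the degree. Factor $p$ over $\mathbb R$: roots outside $(0,1)$ give linear factors $t-\alpha$ with $\alpha\le0$, or $\beta-t$ with $\beta\ge1$. These are nonnegative combinations of $t$ and $1-t$, and in fact of $1$, $t$ and $1-t$. Roots inside $(0,1)$ must have even multiplicity, giving square factors, and complex roots give factors $(t-a)^2+b^2$, which are sums of squares. This expresses $p$ as a positive constant times a product of factors from $\{1,t,1-t\}$, squares, and sums of squares. Such products need the commuting structure: a product of commuting $\ge_h$-positive polynomial operators need not obviously be $\ge_h$-positive. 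I would therefore regroup via $t^2=t\cdot t$ and $t(1-t)$ so that each term has the form $q\,r^2$ with $q$ one of the four basic factors. A product like $t^k(1-t)^m$ reduces, modulo square factors, to one of $1$, $t$, $1-t$, $t(1-t)$. This regrouping is what makes Lemma~\ref{lem:polynomial-sandwich} applicable.
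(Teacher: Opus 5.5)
Your proposal is correct and follows essentially the same route as the paper: cite the Markov--Luk\'acs representation, treat the $t(1-t)$ factor with Lemma~\ref{lem:positive-contraction-square} ($P^2\le_h P$), make each term $q(P)r(P)^2$ positive via Lemma~\ref{lem:polynomial-sandwich}, and deduce the comparison statement by applying the first part to $q-p$. The paper uses the two parity-dependent single-square forms directly, so your four-term sums-of-squares version and your fallback factorization argument are harmless generalizations but are not needed.
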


\begin{proof}
We use the classical Markov--Luk\'acs representation for a polynomial nonnegative
on a compact interval; see, for example, \cite{PowersReznick}.  If $\deg p=2m$, there are real polynomials
$a,b$ such that
\[
 p(t)=a(t)^2+t(1-t)b(t)^2.
\]
Hence
\[
 p(P)=a(P)^2+P(I-P)b(P)^2.
\]
The first term is positive because $a(P)=a(P)^*$ and
$a(P)^2=a(P)^*a(P)$.  By Lemma~\ref{lem:positive-contraction-square},
$P(I-P)=P-P^2\ge_h0$.  It commutes with $b(P)$, so
Lemma~\ref{lem:polynomial-sandwich} makes the second term positive.

If $\deg p=2m+1$, Markov--Luk\'acs gives
\[
 p(t)=t a(t)^2+(1-t)b(t)^2.
\]
Thus
\[
 p(P)=Pa(P)^2+(I-P)b(P)^2,
\]
and both terms are positive by Lemma~\ref{lem:polynomial-sandwich}.
The comparison assertion follows by applying the first part to $q-p$.
\end{proof}

\begin{corollary}
\label{cor:uniform-polynomial-control}
Let $P=P^*$ and $0\le_hP\le_hI$.  If $p\in\mathbb R[t]$ and
\[
 |p(t)|\le\varepsilon\qquad(0\le t\le1),
\]
where $\varepsilon\ge0$ is real, then
\[
 \norm{p(P)x}\le\varepsilon\norm{x},\qquad x\in H.
\]
\end{corollary}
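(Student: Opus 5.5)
The approach is to reduce the norm bound to an inner-product estimate, $\inner{p(P)^2x}{x}\le\varepsilon^2\inner{x}{x}$, and obtain that estimate from Theorem~\ref{thm:positive-polynomial-calculus}. The key observation is that $p(P)$ is self-adjoint, since $P=P^*$ and Proposition~\ref{prop:adjoint-properties} gives $p(P)^*=p(P^*)=p(P)$. Consequently,
\[
 \norm{p(P)x}^2=\inner{p(P)x}{p(P)x}=\inner{p(P)^2x}{x}.
\]
So it suffices to prove $p(P)^2\le_h\varepsilon^2I$ and then take square roots in $A$.

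For the inequality we apply the comparison part of Theorem~\ref{thm:positive-polynomial-calculus} to the polynomials $p(t)^2$ and the constant $\varepsilon^2$. The hypothesis $|p(t)|\le\varepsilon$ on $[0,1]$ gives $p(t)^2\le\varepsilon^2$ there. The polynomial calculus is an algebra homomorphism $q\mapsto q(P)$ into $\BT(H)$, because polynomials in a single operator commute. Hence $(p^2)(P)=p(P)^2$, and the constant polynomial $\varepsilon^2$ is sent to $\varepsilon^2I$. The theorem therefore yields $p(P)^2\le_h\varepsilon^2 I$, which means $\inner{p(P)^2x}{x}\le\varepsilon^2\inner{x}{x}=\varepsilon^2\norm{x}^2$ for every $x$.

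Combining this with the identity above gives
\[
 \norm{p(P)x}^2\le(\varepsilon\norm{x})^2 .
\]
The only remaining step, and the one needing some care, is passing from $a^2\le b^2$ to $a\le b$ for $a,b\in A_+$. Here $a=\norm{p(P)x}$ and $b=\varepsilon\norm{x}$. In an Archimedean $f$-algebra, the square map is monotone on the positive cone. One way to see this: put $d=(a-b)^+$. Then
\[
 0\le d^2\le d(a-b)^+ + d\,b\cdot 0 \le \ldots
\]
The cleaner route is to use $a^2-b^2=(a-b)(a+b)$. Multiplication by $d$ is a lattice homomorphism, so $d(a-b)^-=0$, and hence $d(a-b)=d^2$. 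This gives
\[
 0\ge d(a^2-b^2)=d^2(a+b)\ge d^2\cdot d=d^3\ge0,
\]
using $a+b\ge a\ge d$. Thus $d^3=0$, and semiprimeness gives $d=0$. This is the same trick used in Lemma~\ref{lem:positive-contraction-square}, and it establishes $\norm{p(P)x}\le\varepsilon\norm{x}$.
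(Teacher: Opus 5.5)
Your proof is correct and follows the paper's argument: apply Theorem~\ref{thm:positive-polynomial-calculus} to $\varepsilon^2-p(t)^2$ to obtain $p(P)^2\le_h\varepsilon^2I$, use self-adjointness of $p(P)$ to write $\norm{p(P)x}^2=\inner{p(P)^2x}{x}\le\varepsilon^2\norm{x}^2$, and take square roots in $A$. Your extra check that $a^2\le b^2$ implies $a\le b$ on $A_+$ spells out the monotonicity that the paper's ``taking the positive square root'' uses without proof; two small fixes: delete the abandoned first display, and justify $d(a-b)^-=0$ by the fact that the disjoint positive elements $(a-b)^+$ and $(a-b)^-$ have zero product in an $f$-algebra, which is more direct than the lattice-homomorphism property you cite.
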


\begin{proof}
The polynomial $\varepsilon^2-p(t)^2$ is nonnegative on $[0,1]$.
Theorem~\ref{thm:positive-polynomial-calculus} therefore gives
\[
 p(P)^2\le_h\varepsilon^2I.
\]
Since $p(P)$ is self-adjoint,
\[
 \norm{p(P)x}^2
 =\inner{p(P)^2x}{x}
 \le\varepsilon^2\norm{x}^2.
\]
Taking the positive square root in $A$ gives the assertion.
\end{proof}

\begin{theorem}
\label{thm:positive-square-root}
Let $P\in\BT(H)$ be self-adjoint and inner-product positive.  Then there
exists a unique self-adjoint inner-product positive operator
$Q\in\BT(H)$ such that
\[
 \boxed{Q^2=P.}
\]
We denote it by $P^{1/2}$.
\end{theorem}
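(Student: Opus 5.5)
We plan to reduce to the case $0\le_hP\le_hI$ and build $Q$ as a limit of polynomials in $P$.

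\emph{Normalization.} Let $k=\opnorm{P}$, which is an admissible bound by Proposition~\ref{prop:operator-norm-is-norm}, and put $c=k+e$. Then $c\ge e$ is invertible with $0\le c^{-1}\le e$. Set $P_0=c^{-1}P$; it is self-adjoint by Proposition~\ref{prop:adjoint-properties}(ii). By Cauchy--Schwarz \eqref{eq:conditional-CS},
\[
\inner{P_0x}{x}\le c^{-1}\norm{Px}\norm{x}\le c^{-1}k\norm{x}^2\le\norm{x}^2 ,
\]
so $0\le_hP_0\le_hI$. The element $c$ has a positive square root $c^{1/2}\in A$. If $Q_0$ is a square root of $P_0$, then $Q=c^{1/2}Q_0$ is a square root of $P$, because $A$-linearity lets scalars commute with operators. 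It is also self-adjoint and $\ge_h0$.

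\emph{Existence for $P_0$.} By Weierstrass, choose real polynomials $p_n$ with $|p_n(t)-\sqrt t|\le 2^{-n}$ on $[0,1]$. We may take $p_n\ge0$ on $[0,1]$, for instance after adding $2^{-n}$. Then $|p_n-p_m|\le 2^{1-\min(n,m)}$ on $[0,1]$, and Corollary~\ref{cor:uniform-polynomial-control} gives
\[
\norm{(p_n(P_0)-p_m(P_0))x}\le 2^{1-\min(n,m)}\norm{x}.
\]
So for each $x$ the sequence $(p_n(P_0)x)$ is $T$-strongly Cauchy, with a real-scalar rate multiplying $\norm{x}$. By the $T$-strong completeness of $L^2(T)$ \cite{KalauchKuoWatsonStrong} it converges to some $Q_0x$. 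Passing to the limit shows that $Q_0$ is $A$-linear, satisfies $\norm{(Q_0-p_n(P_0))x}\le 2^{1-n}\norm{x}$, and is $T$-strongly bounded with bound $(1+\sup|p_1|)e$. Self-adjointness and $\inner{Q_0x}{x}\ge0$ pass to the limit, because $\inner{\cdot}{y}$ is continuous by Cauchy--Schwarz. For $Q_0^2=P_0$, note that $|p_n(t)^2-t|\le 3\cdot2^{-n}$ on $[0,1]$, so $\norm{(p_n(P_0)^2-P_0)x}\to0$ by the corollary. Also $p_n(P_0)^2x-Q_0^2x=p_n(P_0)(p_n(P_0)-Q_0)x+(p_n(P_0)-Q_0)Q_0x$, and both terms tend to zero by the uniform bounds. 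Limits are unique because $\norm{\cdot}$ is an $A$-valued norm on an Archimedean space. This settles existence.

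\emph{Uniqueness (main obstacle).} Suppose $R=R^*\ge_h0$ with $R^2=P$. Normalize as before: put $R_0=c^{-1/2}R$, so $R_0^2=P_0$. Then $R_0$ commutes with $P_0$, hence with every $p_n(P_0)$, hence with $Q_0$ in the limit. Set $D=Q_0-R_0$. Then $(Q_0+R_0)D=Q_0^2-R_0^2=0$, using commutativity. For any $x$ put $y=Dx$. We get
\[
0=\inner{(Q_0+R_0)y}{y}=\inner{Q_0y}{y}+\inner{R_0y}{y},
\]
and both terms are $\ge0$, so both vanish. By Lemma~\ref{lem:positive-form-CS} applied to the form $B_{Q_0}$, $\inner{Q_0y}{z}=0$ for all $z$, so $Q_0y=0$, and likewise $R_0y=0$. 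Hence $D^2x=Q_0y-R_0y=0$, i.e.\ $\norm{Dx}^2=\inner{D^2x}{x}=0$, and $Dx=0$ by strict positivity.

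The step that still needs care is that $R_0\le_h I$ need not be known a priori. The argument above avoids this, since it only uses commutation and positivity. We must also check that $R_0$ commuting with $P_0$ forces it to commute with the limit $Q_0$; this holds because $R_0$ is $T$-strongly bounded, so $R_0p_n(P_0)x\to R_0Q_0x$.
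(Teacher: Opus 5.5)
Your proof is correct. The existence half is essentially the paper's: normalize by $(e+k)^{-1}$, approximate $\sqrt t$ uniformly on $[0,1]$ by polynomials, get $T$-strong Cauchy sequences from Corollary~\ref{cor:uniform-polynomial-control}, invoke strong completeness, and pass self-adjointness, positivity and $Q_0^2=P_0$ to the limit.

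The paper uses the binomial partial sums $r_n$. Their property $0\le r_n\le 1$ gives $\norm{Rx}\le\norm{x}$ and $R\le_h I$ directly. Your generic Weierstrass approximants give the bound $(1+\sup_{[0,1]}|p_1|)e$ instead, which is all you need. One cosmetic slip: adding $2^{-n}$ doubles the error, so start from tolerance $2^{-n-1}$.

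Your uniqueness argument takes a genuinely different and shorter route. The paper proceeds in three steps:
\begin{enumerate}[label=(\alph*)]
\item It shows that every positive square root $Q_1$ is a strong limit of $g\,r_n(g^{-2}P)$, so any two positive roots commute.
\item It proves that commuting $\ge_h0$ operators have a $\ge_h0$ product, using a square root of one factor.
\item It runs the argument with $X=(Q_1-Q_2)^2$ and $Y=(Q_1+Q_2)^2$, obtaining $0\le_hX^2\le_hXY=0$.
\end{enumerate}

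You only need that an arbitrary root $R_0$ commutes with the constructed root $Q_0$. This is immediate: $Q_0$ is by construction the strong limit of $p_n(P_0)$, and $R_0$ commutes with $P_0=R_0^2$. The classical argument then finishes: put $D=Q_0-R_0$, note $(Q_0+R_0)D=0$, apply Lemma~\ref{lem:positive-form-CS} to $y=Dx$, deduce $D^2=0$, and conclude $D=0$. This avoids both approximating arbitrary roots and the product-positivity lemma. As you note, it also needs no a priori bound $R_0\le_hI$.

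Two small points to make explicit:
\begin{itemize}
\item Passing from $|\inner{Q_0y}{z}|^2\le 0$ to $\inner{Q_0y}{z}=0$ uses semiprimeness of the Archimedean $f$-algebra $A$, as in Lemma~\ref{lem:positive-contraction-square}.
\item $c^{1/2}$ is invertible, with inverse $c^{1/2}c^{-1}\ge0$, so $c^{-1/2}$ makes sense.
\end{itemize}

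What each route buys: the paper's detour yields the reusable fact that commuting inner-product-positive operators have an inner-product-positive product. Your route reaches uniqueness with the minimal set of ingredients, and after uniqueness your construction already shows that $P^{1/2}$ lies in the strong closure of the polynomial algebra of $P$.
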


\begin{proof}
Choose an admissible bound $k\in\mathcal M(P)$ and put
\[
 h=e+k\in A_+.
\]
Then $h\ge e$, so the bounded inversion property gives that $h$ is
invertible with $0<h^{-1}\le e$, and $h^{1/2}$
exists in $A_+$.  Set
\[
 B=h^{-1}P.
\]
Since scalar multiplication by elements of $A$ commutes with every
$A$-linear operator, $B=B^*$ and $B\ge_h0$.  Moreover, for $x\in H$,
conditional Cauchy--Schwarz and $k\in\mathcal M(P)$ give
\[
 \inner{Px}{x}\le\norm{Px}\norm{x}
 \le k\norm{x}^2\le h\norm{x}^2.
\]
Thus $0\le_hB\le_hI$.

For $0\le t\le1$ use the binomial expansion
\[
 \sqrt t
 =1-\sum_{j=1}^{\infty}c_j(1-t)^j,
 \qquad
 c_j=-(-1)^j\binom{1/2}{j}>0.
\]
Since $\sum_{j\ge1}c_j=1$, the series converges uniformly on $[0,1]$.
Let
\[
 r_n(t)=1-\sum_{j=1}^{n}c_j(1-t)^j.
\]
Then $r_n\to\sqrt t$ uniformly on $[0,1]$, and
$0\le r_n(t)\le1$ there.  Put $R_n=r_n(B)$.

If
\[
 \varepsilon_{m,n}
 :=\sup_{0\le t\le1}|r_n(t)-r_m(t)|,
\]
Corollary~\ref{cor:uniform-polynomial-control} yields
\[
 \norm{(R_n-R_m)x}\le\varepsilon_{m,n}\norm{x}.
\]
For every fixed $x$, $(R_nx)$ is therefore $T$-strong Cauchy.  Strong
completeness of $H$ \cite{KalauchKuoWatsonStrong} gives a vector $Rx$
with $R_nx\to Rx$ $T$-strongly.  We spell out the two points needed here.
For $x,y\in H$, continuity of addition for the $A$-valued norm gives
\[
 R(x+y)=Rx+Ry.
\]
If $a\in A$, then
\[
 \norm{R_n(ax)-aRx}=|a|\,\norm{R_nx-Rx}\longrightarrow_o0,
\]
because multiplication by $|a|$ is order continuous on $A$; uniqueness
of the strong limit therefore gives $R(ax)=aRx$.  Thus $R$ is $A$-linear.
Moreover $0\le r_n\le1$, and
Corollary~\ref{cor:uniform-polynomial-control} gives
$\norm{R_nx}\le\norm{x}$.  Since
\[
 \norm{Rx}\le\norm{Rx-R_nx}+\norm{R_nx},
\]
passing to the order limit yields
\[
 \norm{Rx}\le\norm{x}.
\]
Hence $R\in\BT(H)$, with the admissible bound $e$.

Again $0\le r_n\le1$, so
Theorem~\ref{thm:positive-polynomial-calculus} gives $R_n\ge_h0$ and
$I-R_n\ge_h0$.  For fixed $x$,
\[
 |\inner{(R_n-R)x}{x}|
 \le \norm{(R_n-R)x}\norm{x}\longrightarrow_o0,
\]
so the positive cone of $A$ being order closed implies
$0\le_hR\le_hI$.  Finally, for $x,y\in H$,
\[
\begin{aligned}
 |\inner{Rx}{y}-\inner{x}{Ry}|
 &\le \norm{(R-R_n)x}\norm{y}
      +\norm{x}\norm{(R-R_n)y}\longrightarrow_o0.
\end{aligned}
\]
Since every $R_n$ is self-adjoint, it follows that $R=R^*$.

The polynomials $r_n(t)^2$ converge uniformly to $t$ on $[0,1]$.
Corollary~\ref{cor:uniform-polynomial-control} therefore implies
\[
 \norm{(R_n^2-B)x}\longrightarrow0
\]
$T$-strongly, uniformly relative to $\norm{x}$.  On the other hand
$0\le r_n\le1$ implies $\norm{R_nz}\le\norm z$ by the same corollary
applied to $r_n$, and hence
\[
 \norm{R_n^2x-R^2x}
 \le \norm{R_n(R_n-R)x}+\norm{(R_n-R)Rx}
 \longrightarrow0.
\]
Uniqueness of strong limits gives $R^2=B$.

Define
\[
 Q=h^{1/2}R.
\]
Then $Q=Q^*$, $Q\ge_h0$, and, since scalars commute with $R$,
\[
 Q^2=hR^2=hB=P.
\]
This proves existence.

For uniqueness, let $Q_1=Q_1^*\ge_h0$ and $Q_1^2=P$.  Choose an
admissible bound $\ell\in\mathcal M(Q_1)$ and put $g=e+\ell$.  Then
$C=g^{-1}Q_1$ is a positive contraction: indeed
\[
 \inner{Cx}{x}=g^{-1}\inner{Q_1x}{x}
 \le g^{-1}\norm{Q_1x}\norm{x}
 \le g^{-1}\ell\norm{x}^2\le\norm{x}^2.
\]
Moreover $C^2=g^{-2}P$.  Since $r_n(t^2)\to t$ uniformly on $[0,1]$,
Corollary~\ref{cor:uniform-polynomial-control}, applied to the positive
contraction $C$, gives
\[
 r_n(C^2)=r_n(g^{-2}P)\longrightarrow C
\]
strongly.  Multiplying the last convergence by the fixed scalar $g$ gives
\[
 g\,r_n(g^{-2}P)\longrightarrow Q_1
\]
strongly.  Hence every positive square root $Q_1$ belongs to the strong
closure of the polynomial algebra generated by $P$ and the scalar
multiplication operators.

Let $Q_1,Q_2$ be two positive square roots of $P$.  The preceding
observation implies that $Q_1$ and $Q_2$ commute, but the passage to the
strong limit uses boundedness and is worth recording.  For example, let
$A_n$ be polynomial--scalar approximants with $A_nx\to Q_1x$ strongly for
every $x$.  Since $Q_2$ commutes with $P$ and with scalar multiplication,
$Q_2A_n=A_nQ_2$.  Because $Q_2\in\BT(H)$,
\[
 Q_2A_nx\longrightarrow Q_2Q_1x,
\]
while, by the defining strong convergence of $A_n$,
\[
 A_nQ_2x\longrightarrow Q_1Q_2x.
\]
Uniqueness of strong limits gives $Q_2Q_1=Q_1Q_2$.

We shall use one consequence of the existence part already proved.  If
$C,D\ge_h0$ are commuting self-adjoint operators, then $CD\ge_h0$.
Indeed, let $F=C^{1/2}$ be the square root furnished by the construction
above.  Since $F$ is a strong limit of polynomials in $C$ with coefficients
coming from scalar multiplication by elements of $A=R(T)$, every
$T$-strongly bounded operator commuting with $C$ commutes with those
approximants and hence, by the same bounded-limit argument as above, with
$F$.  In particular $D$ commutes with $F$, and therefore
\[
 CD=FDF\ge_h0.
\]

Set
\[
 X=(Q_1-Q_2)^2,\qquad Y=(Q_1+Q_2)^2.
\]
Then $X,Y\ge_h0$, they commute, and
\[
 Y-X=4Q_1Q_2\ge_h0.
\]
Thus $0\le_hX\le_hY$.  Moreover,
\[
 XY=(Q_1-Q_2)^2(Q_1+Q_2)^2
    =(Q_1^2-Q_2^2)^2=0.
\]
Since $X$ and $Y-X$ are commuting positive operators, their product is
positive.  Therefore
\[
 0\le_hX^2\le_hXY=0,
\]
so $X^2=0$.  Indeed, for every $x$,
\[
 \norm{Xx}^2=\inner{X^2x}{x}=0,
\]
and hence $X=0$.  Finally,
\[
 \norm{(Q_1-Q_2)x}^2
 =\inner{Xx}{x}=0
\]
for every $x$, whence $Q_1=Q_2$.  The positive square root is unique.
\end{proof}

\begin{definition}
For $S\in\BT(H)$ define
\[
 \boxed{|S|_*:=(S^*S)^{1/2}.}
\]
This is well defined by Theorem~\ref{thm:positive-square-root}, since
$S^*S$ is self-adjoint and
\[
 \inner{S^*Sx}{x}=\norm{Sx}^2\ge0.
\]
\end{definition}

\begin{remark}
Wright's spectral theorem for normal operators on Kaplansky--Hilbert
modules \cite{WrightSpectral} provides an important classical analogue of
this construction.  We do not invoke that theorem here: its coefficient
algebra is a Stone algebra, whereas the present argument uses only the
Dedekind complete $f$-algebra structure of $R(T)$ and the strong
completeness of $L^2(T)$.  The square root above is obtained directly by
positive polynomial calculus and binomial approximation.
\end{remark}

% ============================================================
\section{Comparison of the order and spectral moduli}
\label{sec:comparison-moduli}
% ============================================================

We now compare the two modulus constructions available in the present
setting.  To avoid ambiguity, throughout this section we write
\[
 |S|_{\rm ord}
\]
for the Riesz--Kantorovich modulus of a $T$-regular operator and
\[
 |S|_*=(S^*S)^{1/2}
\]
for the spectral modulus introduced above.  The two constructions have
quite different origins: the first belongs to the vector-lattice structure
of $\BTr(H)$, whereas the second is determined by the adjoint and
the $R(T)$-valued inner product.

\subsection{Multiplication operators}

For $a\in A=R(T)$ let $M_a:H\to H$ be the multiplication operator
\[
 M_ax=ax.
\]
Since $H$ is an $A$-module, $M_a$ is well defined.  Moreover,
\[
 \norm{M_ax}^2
 =T(a^2x^2)=a^2T(x^2)=|a|^2\norm{x}^2,
\]
and hence
\[
 \norm{M_ax}=|a|\norm{x}.
\]
Thus $M_a\in\BT(H)$.  Since
\[
 M_a=M_{a^+}-M_{a^-}
\]
and $M_{a^+},M_{a^-}$ are order positive, in fact
$M_a\in\BTr(H)$.  Also
\[
 \inner{M_ax}{y}=T(axy)=T(xay)=\inner{x}{M_ay},
\]
so that $M_a^*=M_a$.

\begin{proposition}
\label{prop:multiplication-moduli}
For every $a\in R(T)$,
\[
 \boxed{|M_a|_{\rm ord}=|M_a|_*=M_{|a|}.}
\]
\end{proposition}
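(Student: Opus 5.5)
The plan is to prove the two equalities separately, in each case by identifying $M_{|a|}$ as the object characterized by the relevant construction.

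For the order modulus, we show $|M_a|_{\rm ord}=M_{|a|}$ directly from the Riesz--Kantorovich formula \eqref{eq:RK-modulus}. Fix $x\in H_+$. For $|u|\le x$, multiplication in the $f$-algebra gives $|au|=|a||u|\le |a|x$, so $|M_a|_{\rm ord}x\le |a|x$. For the reverse inequality, use the band projections in the Dedekind complete space: let $p$ be the component of $e$ in the band generated by $a^+$, so that $a^+=pa$ and $a^-=-(e-p)a$, and put $u=(2p-e)x$ (equivalently $u=px-(e-p)x$). Then $|u|=x$, since multiplication by the idempotent-type components is a band projection, and $au=a^+x+a^-x=|a|x$. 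Hence the supremum is attained and $|M_a|_{\rm ord}x=|a|x$ on $H_+$. The main subtlety, and the step I expect to require the most care, is justifying that $p$ lies in $A$ (so that $pa$ and $px$ are well defined and $u\in H$). I would handle it by noting that $A=R(T)$ is a Dedekind complete Riesz subspace containing $e$, so the band projection of $e$ onto $\{a^+\}^{dd}$ computed in $A$ is an element $p\in A$ with $0\le p\le e$, $p\wedge(e-p)=0$. In an $f$-algebra with unit this $p$ is idempotent and acts as the band projection onto $\{a^+\}^{dd}$ on the module $H$.

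A fallback that avoids components is also available. $M_{a^+}$ and $M_{a^-}$ are positive, and $M_{a^+}\wedge M_{a^-}=0$ in $\mathcal L_b^A(H)$. This follows because, by the Riesz--Kantorovich formula, $(M_{a^+}\wedge M_{a^-})x=\inf\{a^+y+a^-(x-y):0\le y\le x\}$, and taking $y=x\wedge n a^- x$-type truncations, with disjointness $a^+\perp a^-$ giving $a^+y\perp a^-$-pieces, drives the infimum to $0$. Disjoint positive parts then give $|M_a|_{\rm ord}=M_{a^+}+M_{a^-}=M_{|a|}$.

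For the spectral modulus, it suffices by the uniqueness part of Theorem~\ref{thm:positive-square-root} to check three properties of $M_{|a|}$. First, $M_{|a|}\in\BT(H)$ and it is self-adjoint, both shown in the preceding discussion. Second, it is inner-product positive: $\inner{M_{|a|}x}{x}=T(|a|x^2)=|a|T(x^2)\ge0$ by averaging. Third, $M_{|a|}^2=M_{|a|^2}=M_{a^2}=M_a^*M_a$, using $M_a^*=M_a$ and $|a|^2=a^2$ in an $f$-algebra. Uniqueness then gives $|M_a|_*=(M_a^*M_a)^{1/2}=M_{|a|}$.
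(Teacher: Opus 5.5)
Your proof is correct. The spectral half is the paper's argument: self-adjointness and inner-product positivity of $M_{|a|}$, the identity $M_{|a|}^2=M_{a^2}=M_a^*M_a$, and uniqueness in Theorem~\ref{thm:positive-square-root}. For the order modulus you take a genuinely different, more hands-on route.

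The paper gets $|M_a|_{\rm ord}=M_{|a|}$ in one line from orthomorphism theory. Multiplication by $a$ is an orthomorphism, and $a\mapsto M_a$ is a lattice homomorphism, which amounts to $|M_a|_{\rm ord}x=|ax|=|a|x$ for $x\ge0$.

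You instead evaluate the Riesz--Kantorovich supremum directly. The bound $|au|=|a||u|\le|a|x$ gives the upper estimate. For the lower one, let $p\in A$ be the component of $e$ in the band of $a^+$, which exists by the projection property of the Dedekind complete $A$. The vector $u=(2p-e)x$ then satisfies $|u|=x$, because $px\perp(e-p)x$. It also gives $au=a(2p-e)x=(a^++a^-)x=|a|x$, using $pa^-=0$ and $(e-p)a^+=0$, which follow from disjointness in an $f$-algebra. So the supremum is actually attained, and all these steps check out.

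What each route buys:
\begin{itemize}
\item Your version is self-contained. It uses only Dedekind completeness of $A$ and elementary $f$-algebra identities, and it explicitly verifies the lattice-homomorphism property of $a\mapsto M_a$ on $H$ that the paper obtains by citation.
\item The paper's version is shorter and applies verbatim to any orthomorphism, without constructing a ``sign'' element $2p-e$ in $A$.
\end{itemize}

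Your fallback via $M_{a^+}\wedge M_{a^-}=0$ is not needed and is only sketched. It can be completed by taking $y=x\wedge na^-x$: then $a^+y=0$ and $a^-(x-y)=(a^--n(a^-)^2)^+x\le\frac{1}{4n}x$.
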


\begin{proof}
The lattice modulus of a multiplication operator is the multiplication
operator by the lattice modulus of its coefficient.  Indeed, multiplication
by $a$ is an orthomorphism and the canonical embedding of the unital
$f$-algebra $A$ into its orthomorphism algebra is a lattice homomorphism;
therefore
\[
 |M_a|_{\rm ord}=M_{|a|}.
\]
On the other hand,
\[
 M_a^*M_a=M_{a^2}=M_{|a|}^2.
\]
For every $x\in H$,
\[
 \inner{M_{|a|}x}{x}=T(|a|x^2)=|a|T(x^2)\ge0,
\]
so $M_{|a|}\ge_h0$.  By uniqueness in
Theorem~\ref{thm:positive-square-root},
\[
 |M_a|_*=(M_a^*M_a)^{1/2}=M_{|a|}.
\]
\end{proof}

\subsection{The two moduli need not coincide}

The failure already occurs in the classical two-dimensional case and is
therefore not a peculiarity of conditional $L^2$-spaces.

\begin{example}
\label{ex:two-moduli}
Let $\Omega=\{1,2\}$ with equal weights and let $T$ be expectation onto the
constants.  Then $A=R(T)=\mathbb R e$ and $H=L^2(T)=\mathbb R^2$ with the
coordinatewise lattice order and
\[
 \inner{x}{y}=\frac12(x_1y_1+x_2y_2)e.
\]
Consider
\[
 S=\begin{pmatrix}1&1\\[2pt]1&-1\end{pmatrix}.
\]
In the present scalar finite-dimensional setting, every linear operator is
regular and $T$-strongly bounded.  Hence $S\in\BTr(H)$.  Moreover, $S^*=S$ and
$S^*S=2I$, whence
\[
 |S|_*=\sqrt2\,I.
\]
For $x=(x_1,x_2)\ge0$, the Riesz--Kantorovich formula gives
\[
 |S|_{\rm ord}x
 =\sup_{|u|\le x}|Su|
 =(x_1+x_2,x_1+x_2).
\]
Consequently
\[
 |S|_{\rm ord}
 =\begin{pmatrix}1&1\\[2pt]1&1\end{pmatrix}
 \neq \sqrt2 I=|S|_*.
\]
In fact neither modulus dominates the other in the lattice order.  Applied
to $e_1=(1,0)$, their two differences are respectively
\[
 (|S|_{\rm ord}-|S|_*)e_1=(1-\sqrt2,1),
 \qquad
 (|S|_*-|S|_{\rm ord})e_1=(\sqrt2-1,-1),
\]
and neither vector is positive.  The same example also shows that the
$A$-valued operator norm is not a lattice norm on $\BTr(H)$: here it is the
usual Euclidean operator norm multiplied by $e$, and therefore
\[
 \opnorm{S}=\sqrt2\,e,
 \qquad
 \opnorm{|S|_{\rm ord}}=2e.
\]
\end{example}

The preceding example gives the algebraic obstruction directly:
\[
 |S|_{\rm ord}^{\,2}
 =\begin{pmatrix}2&2\\[2pt]2&2\end{pmatrix}
 \neq 2I=S^*S.
\]
The next result identifies the obstruction without any self-adjointness
assumption.

\subsection{The general cross-term identity}

The preceding example gives the algebraic obstruction directly.  The
self-adjointness assumption is not needed.  Comparing the order modulus
through $|S|_{\rm ord}^{*}|S|_{\rm ord}$ gives an identity for arbitrary
$T$-regular operators.

\begin{theorem}
\label{thm:general-cross-term-identity}
Let $S\in\BTr(H)$.  Then $S^*\in\BTr(H)$ and
$|S|_{\rm ord}\in\BTr(H)_+=\BT(H)_+$, and
\[
 \boxed{
 |S|_{\rm ord}^{*}|S|_{\rm ord}-S^*S
 =2\bigl((S^*)^+S^-+(S^*)^-S^+\bigr).
 }
 \tag{\ensuremath{\mathrm{CF}}}
\]
In particular,
\[
 \boxed{\qquad S^*S\le_o |S|_{\rm ord}^{*}|S|_{\rm ord}.\qquad}
\]
Moreover, equality holds if and only if
\[
 (S^*)^+S^-=0
 \qquad\text{and}\qquad
 (S^*)^-S^+=0.
\]
\end{theorem}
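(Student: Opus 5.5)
The plan is to reduce everything to the lattice identities of Theorem~\ref{thm:adjoint-modulus} and then expand a product. Membership comes first. By Theorem~\ref{thm:adjoint-modulus}, $S^*\in\BTr(H)$. By Theorem~\ref{thm:T-regular-lattice}, $|S|_{\rm ord}$, $S^+$ and $S^-$ lie in $\BT(H)_+$. So every operator appearing in (CF) lies in $\BT(H)$, adjoints are applied only inside their domain, and compositions stay in $\BTr(H)$ because it is a subalgebra.

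Next I compute the adjoint of the modulus. Write $S=S^+-S^-$ and $|S|_{\rm ord}=S^++S^-$. By \eqref{eq:adjoint-positive-negative-parts} and additivity of the adjoint (Proposition~\ref{prop:adjoint-properties}),
\[
 |S|_{\rm ord}^{*}=(S^+)^*+(S^-)^*=(S^*)^++(S^*)^-,\qquad S^*=(S^*)^+-(S^*)^-.
\]
Note that $|S|_{\rm ord}^{*}$ means $(|S|_{\rm ord})^*$, which equals $|S^*|_{\rm ord}$ by \eqref{eq:adjoint-modulus}.

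Then I expand both products by bilinearity of composition:
\[
 |S|_{\rm ord}^{*}|S|_{\rm ord}=\bigl((S^*)^++(S^*)^-\bigr)(S^++S^-),\qquad
 S^*S=\bigl((S^*)^+-(S^*)^-\bigr)(S^+-S^-).
\]
Subtracting, the terms $(S^*)^+S^+$ and $(S^*)^-S^-$ cancel, and each cross term appears twice. This gives (CF) directly.

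Positivity follows because $(S^*)^\pm$ and $S^\pm$ are order positive. Hence $(S^*)^+S^-$ and $(S^*)^-S^+$ are order positive, being compositions of positive maps, and the right side of (CF) is $\ge_o0$. This yields $S^*S\le_o|S|_{\rm ord}^{*}|S|_{\rm ord}$.

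For the equality case, equality holds iff the sum $(S^*)^+S^-+(S^*)^-S^+$ vanishes. One direction is trivial. Conversely, if a sum of two order-positive operators is $0$, then for $x\ge0$ both values are $\ge0$ and sum to $0$, so each is $0$. Since every $x$ is $x^+-x^-$, each operator vanishes identically.

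The only delicate point is the identification $(|S|_{\rm ord})^*=(S^*)^++(S^*)^-$. It relies on \eqref{eq:adjoint-positive-negative-parts}, and hence on the adjoint being an order isomorphism of $\BTr(H)$, which is already established. Beyond that, the proof is bookkeeping.
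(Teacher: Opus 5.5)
Your proposal is correct and follows essentially the same route as the paper. Both arguments get the memberships from Theorems~\ref{thm:T-regular-lattice} and~\ref{thm:adjoint-modulus}, rewrite $|S|_{\rm ord}^{*}=(S^*)^++(S^*)^-$ and $S^*=(S^*)^+-(S^*)^-$ via $(S^\pm)^*=(S^*)^\pm$, expand and subtract the two products, and use positivity of compositions of order-positive operators for both the inequality and the equality characterization.
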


\begin{proof}
By Theorem~\ref{thm:T-regular-lattice},
\[
 S=S^+-S^-,\qquad |S|_{\rm ord}=S^++S^-,
\]
with $S^+,S^-,|S|_{\rm ord}\in\BT(H)$.  By
Theorem~\ref{thm:adjoint-modulus}, $S^*\in\BTr(H)$ and
\[
 (|S|_{\rm ord})^*=|S^*|_{\rm ord},\qquad
 (S^+)^*=(S^*)^+,
 \qquad
 (S^-)^*=(S^*)^-.
\]
Put
\[
 P=S^+,\quad N=S^-,\quad
 \widehat P=(S^*)^+,\quad \widehat N=(S^*)^-.
\]
Then
\[
 |S|_{\rm ord}=P+N,
 \qquad
 |S|_{\rm ord}^*=\widehat P+\widehat N,
 \qquad
 S^*=\widehat P-\widehat N.
\]
Therefore
\begin{align*}
 |S|_{\rm ord}^*|S|_{\rm ord}
 &= (\widehat P+\widehat N)(P+N)\\
 &=\widehat PP+\widehat PN+\widehat NP+\widehat NN,
\end{align*}
whereas
\begin{align*}
 S^*S
 &= (\widehat P-\widehat N)(P-N)\\
 &=\widehat PP-\widehat PN-\widehat NP+\widehat NN.
\end{align*}
Subtracting yields (CF).

All four operators $P,N,\widehat P,\widehat N$ are order positive, and the
composition of positive operators is positive.  Hence
\[
 (S^*)^+S^-\ge_o0,
 \qquad
 (S^*)^-S^+\ge_o0.
\]
This proves the operator inequality.  Since a sum of two positive operators
is zero if and only if both summands are zero, the equality characterization
follows as well.
\end{proof}

\begin{remark}
The hypothesis $S\in\BTr(H)$ is substantive.  It ensures simultaneously
that the Riesz--Kantorovich parts are defined and that they belong to
$\BT(H)$, so that all adjoints in (CF) are legitimate.  For a merely
order-bounded $S\in\BT(H)$ this additional $T$-strong boundedness of
$S^+$, $S^-$ and $|S|_{\rm ord}$ has not been assumed or derived; see
Remark~\ref{rem:not-all-order-bounded}.
\end{remark}

\begin{corollary}
\label{thm:cross-term-identity}
Let $S\in\BTr(H)$ be self-adjoint.  Then $S^+$ and $S^-$
are self-adjoint and
\[
 \boxed{
 |S|_{\rm ord}^{\,2}-S^*S
 =2\bigl(S^+S^-+S^-S^+\bigr)\ge_o0.
 }
 \tag{\ensuremath{\dagger}}
\]
Moreover,
\[
 |S|_{\rm ord}^{\,2}=S^*S
 \quad\Longleftrightarrow\quad
 S^+S^-=0
 \quad\Longleftrightarrow\quad
 S^-S^+=0.
\]
\end{corollary}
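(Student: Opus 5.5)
We deduce the corollary from Theorem~\ref{thm:general-cross-term-identity} by specialising to $S^*=S$.

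First, self-adjointness of the parts. Since $S\in\BTr(H)$, Theorem~\ref{thm:adjoint-modulus} gives $(S^+)^*=(S^*)^+$ and $(S^-)^*=(S^*)^-$. Substituting $S^*=S$ yields $(S^+)^*=S^+$ and $(S^-)^*=S^-$. The same theorem gives $(|S|_{\rm ord})^*=|S^*|_{\rm ord}=|S|_{\rm ord}$. Hence $|S|_{\rm ord}^{*}|S|_{\rm ord}=|S|_{\rm ord}^{\,2}$.

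Second, the identity $(\dagger)$. Putting $(S^*)^+=S^+$ and $(S^*)^-=S^-$ into (CF) gives
\[
|S|_{\rm ord}^{\,2}-S^*S=2\bigl(S^+S^-+S^-S^+\bigr).
\]
The right-hand side is $\ge_o0$, because compositions and sums of order-positive operators are order positive.

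Third, the equivalences, which are the only real content. By the equality clause of Theorem~\ref{thm:general-cross-term-identity}, $|S|_{\rm ord}^{\,2}=S^*S$ holds if and only if both $S^+S^-=0$ and $S^-S^+=0$. It therefore suffices to show that $S^+S^-=0$ and $S^-S^+=0$ are equivalent. We apply the adjoint, using Proposition~\ref{prop:adjoint-properties}(iii) and the self-adjointness just proved:
\[
(S^+S^-)^*=(S^-)^*(S^+)^*=S^-S^+.
\]
Since the adjoint is injective (involutive) and $0^*=0$, we get $S^+S^-=0$ if and only if $S^-S^+=0$. This gives the chain of equivalences in the statement.

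No step is a genuine obstacle. The one point needing care is that every adjoint used is legitimate. This holds because $S^\pm$ and $|S|_{\rm ord}$ lie in $\BT(H)$ by Theorem~\ref{thm:T-regular-lattice}, so they are in the domain of the adjoint.
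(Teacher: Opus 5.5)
Your proposal is correct and follows essentially the same route as the paper. You specialise (CF) using $(S^\pm)^*=(S^*)^\pm=S^\pm$ and $|S|_{\rm ord}^*=|S|_{\rm ord}$ from Theorem~\ref{thm:adjoint-modulus}, and you obtain $S^+S^-=0\iff S^-S^+=0$ from $(S^+S^-)^*=(S^-)^*(S^+)^*=S^-S^+$. The only difference is cosmetic: you establish self-adjointness of $S^\pm$ before deriving $(\dagger)$, whereas the paper records it afterwards.
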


\begin{proof}
If $S=S^*$, then $(S^*)^+=S^+$, $(S^*)^-=S^-$ and
$|S|_{\rm ord}^*=|S|_{\rm ord}$.  The identity follows immediately from
Theorem~\ref{thm:general-cross-term-identity}.  Both cross products are
positive operators, so their sum vanishes exactly when both vanish.  Since
$S=S^*$ and the adjoint preserves lattice operations on $\BTr(H)$, both
$S^+$ and $S^-$ are self-adjoint.  Hence
\[
 (S^+S^-)^*=(S^-)^*(S^+)^*=S^-S^+.
\]
Therefore $S^+S^-=0$ if and only if $S^-S^+=0$.
\end{proof}

\begin{theorem}
\label{thm:moduli-equality-criterion}
Let $S\in\BTr(H)$ be self-adjoint, and suppose in addition
that
\[
 |S|_{\rm ord}\ge_h0.
\]
Then
\[
 \boxed{
 |S|_{\rm ord}=|S|_*
 \quad\Longleftrightarrow\quad
 S^+S^-=0.
 }
\]
Equivalently, equality holds if and only if
$S^+S^-+S^-S^+=0$.
\end{theorem}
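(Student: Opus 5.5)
The plan is to reduce the equality of the two moduli to the identity $|S|_{\rm ord}^{\,2}=S^*S$, and then read off the answer from Corollary~\ref{thm:cross-term-identity}. The bridge between the two is the uniqueness part of Theorem~\ref{thm:positive-square-root}.

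First I check that $|S|_{\rm ord}$ is an admissible candidate for the square root of $S^*S$. By Theorem~\ref{thm:T-regular-lattice}, $|S|_{\rm ord}\in\BT(H)_+$, so it is a $T$-strongly bounded operator. Since $S=S^*$, Theorem~\ref{thm:adjoint-modulus} gives
\[
 \bigl(|S|_{\rm ord}\bigr)^*=|S^*|_{\rm ord}=|S|_{\rm ord},
\]
so $|S|_{\rm ord}$ is self-adjoint. By hypothesis it is also inner-product positive. Its square is $|S|_{\rm ord}^*|S|_{\rm ord}$, which is exactly the operator appearing in (CF).

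For the direction ($\Leftarrow$), assume $S^+S^-=0$. Corollary~\ref{thm:cross-term-identity} then gives $|S|_{\rm ord}^{\,2}=S^*S$. So $|S|_{\rm ord}$ is a self-adjoint, inner-product positive operator in $\BT(H)$ whose square is $S^*S$. By uniqueness in Theorem~\ref{thm:positive-square-root},
\[
 |S|_{\rm ord}=(S^*S)^{1/2}=|S|_* .
\]

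For the direction ($\Rightarrow$), assume $|S|_{\rm ord}=|S|_*$. Squaring and using $Q^2=P$ from the definition of the square root gives $|S|_{\rm ord}^{\,2}=(|S|_*)^2=S^*S$. The identity ($\dagger$) then yields $2(S^+S^-+S^-S^+)=0$. Corollary~\ref{thm:cross-term-identity} converts this into $S^+S^-=0$, equivalently $S^-S^+=0$.

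The equivalent formulation in terms of $S^+S^-+S^-S^+=0$ comes from the same corollary. The sum is a sum of two order-positive operators, so it vanishes if and only if each summand vanishes. By the adjoint relation $(S^+S^-)^*=S^-S^+$, this happens if and only if $S^+S^-=0$.

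There is no serious obstacle here. The one point needing care is to justify that $|S|_{\rm ord}$ lies in $\BT(H)$ and is self-adjoint before uniqueness is applied. This uses only the $T$-regularity of $S$ and the lattice-preserving property of the adjoint on $\BTr(H)$. The extra hypothesis $|S|_{\rm ord}\ge_h0$ is exactly what uniqueness requires, since order positivity does not imply inner-product positivity.
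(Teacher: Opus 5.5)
Your proposal is correct and follows the paper's proof essentially verbatim. Both directions go through the self-adjoint cross-term identity ($\dagger$), and the converse is closed by uniqueness of the inner-product-positive square root; your explicit derivation of $(|S|_{\rm ord})^*=|S|_{\rm ord}$ from Theorem~\ref{thm:adjoint-modulus} is a small improvement, since the paper's proof asserts this self-adjointness ``by hypothesis'' although only inner-product positivity is assumed.
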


\begin{proof}
If $|S|_{\rm ord}=|S|_*$, then
\[
 |S|_{\rm ord}^{\,2}=|S|_*^2=S^*S,
\]
and Corollary~\ref{thm:cross-term-identity} gives $S^+S^-=0$.
Conversely, if $S^+S^-=0$, the same corollary gives
\[
 |S|_{\rm ord}^{\,2}=S^*S.
\]
By hypothesis $|S|_{\rm ord}$ is self-adjoint and inner-product positive.
It is therefore the distinguished inner-product-positive square root of
$S^*S$.  Uniqueness in Theorem~\ref{thm:positive-square-root} yields
\[
 |S|_{\rm ord}=(S^*S)^{1/2}=|S|_*.
\]
\end{proof}

\begin{remark}
The hypothesis $|S|_{\rm ord}\ge_h0$ is needed for the converse argument
in the preceding theorem: once $|S|_{\rm ord}^{\,2}=S^*S$ is known, it is
this inner-product positivity that identifies $|S|_{\rm ord}$ with the
distinguished positive square root.  The general identity (CF) and the order
inequality $S^*S\le_o|S|_{\rm ord}^*|S|_{\rm ord}$ require no
self-adjointness and no inner-product positivity of $|S|_{\rm ord}$.
\end{remark}

\begin{remark}
For multiplication operators,
\[
 (M_a)^+=M_{a^+},\qquad (M_a)^-=M_{a^-},
\]
and $a^+a^-=0$, so the cross products vanish.  By contrast,
Example~\ref{ex:two-moduli} mixes the two coordinate bands and its cross
terms are non-zero.  The general formula (CF) shows that, without
self-adjointness, the relevant obstruction is not merely the interaction of
$S^+$ and $S^-$, but the interaction of the positive and negative parts of
$S$ with the opposite parts of its adjoint.
\end{remark}

% ============================================================
% Bibliography
% ============================================================


\begin{thebibliography}{99}

\bibitem{AbramovichAliprantis}
Y.~A.~Abramovich and C.~D.~Aliprantis,
\newblock \emph{An Invitation to Operator Theory},
\newblock Graduate Studies in Mathematics, Vol.~50,
American Mathematical Society, Providence, RI, 2002.

\bibitem{AbramovichAliprantisPolyrakis}
Y.~A.~Abramovich, C.~D.~Aliprantis, and I.~A.~Polyrakis,
\newblock Lattice-subspaces and positive projections,
\newblock {\em Proc. Roy. Irish Acad. Sect. A} {\bf 94A} (1994), no.~2,
237--253.

\bibitem{ABPositiveOperators}
C.~D.~Aliprantis and O.~Burkinshaw,
\newblock \emph{Positive Operators},
\newblock Springer, Dordrecht, 2006.

\bibitem{AzouziKuoRamdaneWatson}
Y.~Azouzi, W.-C.~Kuo, K.~Ramdane and B.~A.~Watson,
\newblock Convergence in Riesz spaces with conditional expectation operators,
\newblock \emph{Positivity} \textbf{19} (2015), 647--657.
\newblock \url{https://doi.org/10.1007/s11117-014-0320-6}.

\bibitem{AzouziTrabelsi}
Y.~Azouzi and M.~Trabelsi,
\newblock $L^p$-spaces with respect to conditional expectation on Riesz spaces,
\newblock \emph{Journal of Mathematical Analysis and Applications}
\textbf{447} (2017), 798--816.

\bibitem{BeukersHuijsmansDePagter}
F.~Beukers, C.~B.~Huijsmans and B.~de Pagter,
\newblock Unital embedding and complexification of $f$-algebras,
\newblock \emph{Mathematische Zeitschrift} \textbf{183} (1983), 131--144.
\newblock \url{https://doi.org/10.1007/BF01187219}.

\bibitem{ChamberlainWortel}
T.~Chamberlain and M.~Wortel,
\newblock The Riesz--Kantorovich formulas for $\mathbb L$-vector lattices,
\newblock \emph{Positivity} \textbf{30} (2026), Article~28.
\newblock \url{https://doi.org/10.1007/s11117-026-01184-w}.

\bibitem{ConwayFA}
J.~B.~Conway,
\newblock \emph{A Course in Functional Analysis}, 2nd ed.,
\newblock Graduate Texts in Mathematics 96, Springer, New York, 1990.

\bibitem{HuijsmansDePagterIdeal}
C.~B.~Huijsmans and B.~de Pagter,
\newblock Ideal theory in $f$-algebras,
\newblock \emph{Transactions of the American Mathematical Society}
\textbf{269} (1982), 225--245.
\newblock \url{https://doi.org/10.2307/1998601}.

\bibitem{KalauchKuoWatsonRF}
A.~Kalauch, W.-C.~Kuo and B.~A.~Watson,
\newblock A Hahn--Jordan decomposition and Riesz--Fr\'echet
representation theorem in Riesz spaces,
\newblock \emph{Quaestiones Mathematicae} \textbf{47} (2024),
Suppl.~1, S233--S246.
\newblock
\url{https://doi.org/10.2989/16073606.2023.2287843}.

\bibitem{KalauchKuoWatsonStrong}
A.~Kalauch, W.-C.~Kuo and B.~A.~Watson,
\newblock Strong completeness of a class of $L^2(T)$-type Riesz spaces,
\newblock \emph{Proceedings of the American Mathematical Society,
Series B} \textbf{11} (2024), 243--253.
\newblock
\url{https://doi.org/10.1090/bproc/230}.

\bibitem{KaplanskyHilbert}
I.~Kaplansky,
\newblock Modules over operator algebras,
\newblock \emph{American Journal of Mathematics} \textbf{75} (1953), 839--858.

\bibitem{KuoLabuschagneWatson}
W.-C.~Kuo, C.~C.~A.~Labuschagne and B.~A.~Watson,
\newblock Conditional expectations on Riesz spaces,
\newblock \emph{Journal of Mathematical Analysis and Applications}
\textbf{303} (2005), 509--521.
\newblock
\url{https://doi.org/10.1016/j.jmaa.2004.08.050}.

\bibitem{KuoRoddaWatson}
W.-C.~Kuo, D.~Rodda and B.~A.~Watson,
\newblock Strong sequential completeness of the natural domain of a
conditional expectation operator in Riesz spaces,
\newblock \emph{Proceedings of the American Mathematical Society}
\textbf{147} (2019), 1597--1603.
\newblock \url{https://doi.org/10.1090/proc/14341}.

\bibitem{LabuschagneWatson}
C.~C.~A.~Labuschagne and B.~A.~Watson,
\newblock Discrete stochastic integration in Riesz spaces,
\newblock \emph{Positivity} \textbf{14} (2010), 859--875.
\newblock \url{https://doi.org/10.1007/s11117-010-0089-1}.

\bibitem{Lance}
E.~C.~Lance,
\newblock \emph{Hilbert $C^*$-Modules: A Toolkit for Operator Algebraists},
\newblock London Mathematical Society Lecture Note Series 210,
Cambridge University Press, Cambridge, 1995.
\newblock \url{https://doi.org/10.1017/CBO9780511526206}.

\bibitem{PowersReznick}
V.~Powers and B.~Reznick,
\newblock Polynomials that are positive on an interval,
\newblock \emph{Transactions of the American Mathematical Society}
\textbf{352} (2000), 4677--4692.
\newblock \url{https://doi.org/10.1090/S0002-9947-00-02595-2}.

\bibitem{ReedSimonI}
M.~Reed and B.~Simon,
\newblock \emph{Methods of Modern Mathematical Physics, Vol.~I: Functional Analysis},
\newblock Academic Press, New York, 1980.

\bibitem{WrightSpectral}
J.~D.~M.~Wright,
\newblock A spectral theorem for normal operators on a Kaplansky--Hilbert module,
\newblock \emph{Proceedings of the London Mathematical Society}
(3) \textbf{19} (1969), 258--268.
\newblock \url{https://doi.org/10.1112/plms/s3-19.2.258}.

\bibitem{ZaanenRieszII}
A.~C.~Zaanen,
\newblock \emph{Riesz Spaces II},
\newblock North-Holland Mathematical Library 30, North-Holland, Amsterdam, 1983.

\end{thebibliography}
\end{document}